\newcommand{\ignore}[1]{}
\newtheorem{theorem}{Theorem}[section]
\newtheorem{corollary}[theorem]{Corollary}
\newtheorem{proposition}[theorem]{Proposition}
\theoremstyle{definition}
\newtheorem{definition}[theorem]{Definition}
\newtheorem{example}[theorem]{Example}
\newtheorem{conjecture}[theorem]{Conjecture}
\theoremstyle{remark}
\newtheorem{remark}[theorem]{Remark}
\newtheorem{discussion}[theorem]{Discussion}
\theoremstyle{notation}
\newtheorem{notation}[theorem]{Notation}
\numberwithin{equation}{section}
\newcommand{\bN}{{\mathbb{N}}}
\newcommand{\bZ}{{\mathbb{Z}}}
\definecolor{grey}{rgb}{0.75,0.75,0.75}
\definecolor{orange}{rgb}{1.0,0.5,0.5}
\definecolor{brown}{rgb}{0.5,0.25,0.0}
\definecolor{pink}{rgb}{1.0,0.5,0.5}
\newcommand{\hlt}{\mathrm {ht\, }}
\newcommand{\reg}{\mbox{\rm{reg}} \,}
\newcommand{\colim}{\mathrm{colim}}
\newcommand{\Hom}{\mbox{\rm{Hom}} }
\newcommand{\Ext}{\mbox{\rm{Ext}} }
\newcommand{\fM}{{\mathfrak m}}
\newcommand{\cP}{{\mathcal P}}
\newcommand{\cQ}{{\mathcal Q}}
\def\gin{{\mathrm{ gin}}}
\newcommand{\K}{\mathbb{K}}
\newcommand{\Z}{\mathbb{Z}}
\begin{document}

\title[Local cohomology of binomial edge ideals]{Local cohomology of binomial edge ideals and their generic initial ideals}

\author[J. \`Alvarez Montaner]{Josep \`Alvarez Montaner}

\address{Departament de Matem\`atiques\\
Universitat Polit\`ecnica de Catalunya\\ Av. Diagonal 647, Barcelona
08028, Spain} \email{Josep.Alvarez@upc.edu}

\thanks{Partially supported by Generalitat de Catalunya 2017 SGR-932 project  and
Spanish Ministerio de Econom\'ia y Competitividad
MTM2015-69135-P. the author is a member of the Barcelona Graduate
School of Mathematics (BGSMath).}



\begin{abstract}
We provide a Hochster type formula for the local cohomology modules of binomial edge ideals. As a consequence we obtain a simple criterion for the Cohen-Macaulayness and Buchsbaumness of these ideals and
we describe their Castelnuovo-Mumford regularity and their Hilbert series.  
Conca and Varbaro \cite{CV18}  have recently proved a conjecture of Conca, De Negri and Gorla \cite{CDG4} relating the graded components of the local cohomology modules of Cartwright-Sturmfels ideals and their generic initial ideals. We provide an alternative proof for the case of binomial edge ideals.
\end{abstract}

\maketitle

\section{Introduction}

Binomial edge ideals have been introduced  by  Herzog, Hibi, Hreinsdottir, Kahle, and Rauh in \cite{HHHKR} and independently by Ohtani in \cite{O}
as a way to associate a binomial ideal to a given simple graph  $G$ on the vertex set $[n]=\{1,\dots, n\}$. Binomial edge ideals can be also understood as 
a natural generalization of the ideal of $2$-minors of a $2\times n$-generic matrix. Over the last few years there has been a flurry of activity in the Combinatorial Commutative Algebra community
trying to translate algebraic properties of the ideals to combinatorial properties of the graph following the same spirit as in the research done in the case of monomial edge ideals.
Indeed, there has been a lot of effort in capturing the Cohen-Macaulay property of these ideals \cite{BMS,EHH, HHHKR, RR14, Rin13, Rin17}, 
to study the Betti numbers and the Castelnuovo-Mumford regularity \cite{dAH, EZ, HR18, JK, JNR, JNR2, MM, MR18-1,MR18-2, SK, ZZ}
or the Hilbert series \cite{BNB, KS, MS, SZ}. We should point out that most of the results mentioned here only work for specific classes of graphs such as {\it closed graphs}, {\it block graphs}, 
or {\it bipartite graphs} among others.

\vskip 2mm

An essential tool that has been used to address the same questions for monomial edge ideals is the celebrated Hochster's formula which originally appeared in \cite{Hoc} and one may consult different versions
in the books \cite{Sta96}, \cite{BH} and \cite{MS05}. Hochster's formula provide a decomposition of the local cohomology modules $H^r_{\fM}(A/I)$ of the Stanley-Reisner ring $A/I$ associated to a squarefree monomial ideal 
$I\subseteq A$, where  $A=\K[x_1,\dots, x_n]$  is a  polynomial ring with coefficients in a field $\K$. In an equivalent incarnation,  Hochster's formula  describes the Hilbert series of the local cohomology modules of $A/I$. The lack of such a formula for binomial edge ideals is one of the reasons that the questions mentioned above have been so challenging.

\vskip 2mm

In a different line of research,  Conca, De Negri and Gorla \cite{CDG1, CDG2, CDG3, CDG4} introduced a family of ideals inspired by work of Cartwright and Sturmfels in \cite{CS} which, roughly speaking,
are multigraded ideals  whose multigraded generic initial ideal is radical. They proved in \cite{CDG4} that binomial edge ideals are Cartwright-Sturmfels ideals computing explicitly the 
corresponding generic initial ideal. When comparing invariants of Cartwright-Sturmfels ideals with those of its generic initial ideals
they came up with the following:

\begin{conjecture}\label{conjH} \cite[Conjecture 1.14]{CDG4}
Let $I\subseteq A$ be a $\Z^m$-graded Cartwright-Sturmfels ideal and $\gin(I)$ its $\Z^m$-graded generic initial ideal, with $m\leq n$. Then one has: 
$$\dim_\K H^r_{\fM}(A/I)_a=\dim_\K H^r_{\fM}(A/\gin(I))_a$$
for every $r\in \bN$ and every $a\in \Z^m$. 
\end{conjecture}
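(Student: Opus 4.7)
The plan is to extract the conjecture, in the binomial edge ideal case, from the Hochster-type formula for $H^{r}_{\fM}(A/I_G)$ announced in the abstract, and to compare it multidegree by multidegree with the classical Hochster formula applied to the squarefree monomial ideal $\gin(I_G)$, whose explicit combinatorial description was provided by Conca, De Negri and Gorla in \cite{CDG4}. The polynomial ring is $A=\K[x_1,\ldots,x_n,y_1,\ldots,y_n]$ and the relevant $\Z^{n}$-grading assigns $\deg(x_i)=\deg(y_i)=e_i$; with this grading $I_G$ is $\Z^{n}$-homogeneous.

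On the monomial side I would apply Hochster's formula in the fine $\Z^{2n}$-grading to $\gin(I_G)$ and then sum over the fibers of the coarsening $\Z^{2n}\to\Z^{n}$. Since $\gin(I_G)$ is squarefree, its local cohomology is concentrated in the squarefree fine multidegrees $b\in\{0,1\}^{2n}$, and the explicit model of \cite{CDG4} expresses the associated reduced cohomology of the restricted Stanley--Reisner complex purely in terms of the induced subgraphs $G_W$ with $W\subseteq[n]$. Summing the contributions of all fine multidegrees lying over a fixed $a\in\Z^{n}$ then yields a closed formula for $\dim_\K H^{r}_{\fM}(A/\gin(I_G))_a$ in terms of simplicial invariants of subgraphs of $G$; in particular the support is concentrated in $\{0,1,2\}^{n}$.

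On the binomial side I would establish an analogous multigraded decomposition of $H^{r}_{\fM}(A/I_G)$ in each $a\in\Z^{n}$ using the primary decomposition of binomial edge ideals of \cite{HHHKR}, whose prime components are indexed by subsets $S\subseteq[n]$ and encode the connected components of $G_{[n]\setminus S}$. Combining this decomposition with a Mayer--Vietoris or \v{C}ech-type spectral sequence should reduce the computation of $H^{r}_{\fM}(A/I_G)_a$ to reduced simplicial cohomology attached to the same induced subgraphs $G_W$, with support again concentrated in $\{0,1,2\}^{n}$. The $x_i\leftrightarrow y_i$ symmetry is what forces the $\Z^{n}$-graded components to be compatible with a squarefree bookkeeping.

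The main obstacle will be matching, on the nose, the two combinatorial complexes: the one extracted from the Stanley--Reisner ring of $\gin(I_G)$ and the one emerging from the primary/Mayer--Vietoris analysis of $I_G$. Both should ultimately be expressible through the connected components of $G_W$ as $W$ ranges over subsets of $[n]$, but verifying that their reduced cohomology agrees in every homological degree $r$ and every multidegree $a$ will require careful combinatorial bookkeeping. Once this identification is in place, the conjecture for binomial edge ideals follows immediately from the equality of the two Hochster-type formulas.
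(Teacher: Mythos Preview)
Your proposal is a strategy outline rather than a proof, and the step you yourself flag as the ``main obstacle''---matching the two combinatorial decompositions on the nose---is precisely the step that carries all the content. As written, nothing guarantees that the simplicial invariants emerging from the classical Hochster formula for $\gin(J_G)$ (indexed by faces of a Stanley--Reisner complex) coincide with whatever invariants you extract from a Mayer--Vietoris/\v{C}ech analysis of $J_G$ (indexed by sums of primary components $P_S(G)$). These are a priori different posets with different order complexes, and your claim that both reduce to ``induced subgraphs $G_W$'' is not justified; the primary components of $J_G$ are indexed by pairs $(S,\text{connected components of }G\setminus S)$, not simply by subsets $W$.

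The paper's argument sidesteps this matching problem entirely by using the \emph{same} spectral-sequence machinery and the \emph{same} poset $\cQ_{J_G}$ on both sides. The key observations are: (i) $\gin$ commutes with the primary decomposition of $J_G$ (\cite[Corollary 1.12]{CDG4}), so one can label the poset for $\gin(J_G)$ by the ideals $\gin(I_q)$ for $q\in\cQ_{J_G}$; (ii) this poset satisfies the assumptions needed for the spectral sequence to degenerate, yielding a decomposition of $H^r_\fM(A/\gin(J_G))$ with \emph{identical} multiplicities $M_{r,q}$ as in the decomposition of $H^r_\fM(A/J_G)$; (iii) one is then reduced to comparing the building blocks $H^{d_q}_\fM(A/I_q)_a$ and $H^{d_q}_\fM(A/\gin(I_q))_a$, which is immediate because both $A/I_q$ and $A/\gin(I_q)$ are Cohen--Macaulay of the same dimension with the same Hilbert function. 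Your approach, by contrast, commits to two heterogeneous decompositions and then has to reconcile them; the paper's approach makes the reconciliation automatic by design.
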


Indeed, this conjecture implies the equality of the extremal Betti numbers of $I$ and $\gin(I)$, their 
projective dimension and  their Castelnuovo-Mumford regularity as well (see  also \cite[Conjecture 1.13]{CDG4}). A more general version of this conjecture has been proved recently by Conca and Varbaro in \cite{CV18}.

\vskip 2mm

The aim of this work is twofold,  on one hand we want to provide a Hochster type formula for the local cohomology modules of binomial edge ideals using the theory of local cohomology spectral sequences
developed by the author, Boix and Zarzuela  in \cite{ABZ}. On the other hand we want to prove Conjecture \ref{conjH} for binomial edge ideals using a completely different approach to the one considered in \cite{CV18}.
The advantage of our methods is that we provide a very explicit description of the graded pieces of these local cohomology modules. On the contrary, our techniques are very specific for this case of binomial 
edge ideals so we cannot expect to recover the general statement proved in \cite{CV18}.

\vskip 2mm

The structure of the paper is as follows. In Section \ref{spectral} we present the basics on local cohomology spectral sequences developed in \cite{ABZ}. Their description are technically involved so we will
try to avoid some of the details since, in the case that some extra assumptions on the ideal are satisfied, the spectral sequences become enormously simplified. This is the case that we will use throughout
this work. The version that we are going to consider is presented in Theorem \ref{sucesion espectral} and, as a consequence of the degeneration of this spectral sequence, we have a very general version of 
Hochster's formula (see Theorem \ref{Hochster}).

\vskip 2mm

In Section \ref{bei} we find the main results of this work.  After some work needed to verify that binomial edge ideals satisfy those extra assumptions, we obtain the main result of this work in Theorem \ref{main}
which is a Hochster type formula for the local cohomology modules of a binomial edge ideal. As an immediate consequence we obtain in Corollaries \ref{CM} and \ref{Buch} a simple criterion to decide the Cohen-Macaulayness and the Buchsbaumness
of binomial edge ideals. Moreover, we also obtain a formula for the $\bZ$-graded Hilbert series of these local cohomology modules in Theorem \ref{Hilbert}.

\vskip 2mm

In Section \ref{sec-gin} we turn our attention to the local cohomology modules of the generic initial ideal of a binomial edge ideal. Our main result is Theorem \ref{compare} where we settle Conjecture \ref{conjH}.
Our approach is to use a coarser version of the original Hochster formula for this very particular monomial ideal (see Theorem \ref{main2}) that has the same structure as the one obtained in Theorem \ref{main}.
In this way we reduce the problem to a simple comparision of the graded pieces of the building blocks of these decompositions. Finally we present a formula for the Castelnuovo-Mumford regularity of
binomial edge ideals in Theorem \ref{regularity} and a formula for the $\bZ^n$-graded Hilbert series  in Theorem \ref{Hilbert2}.

\vskip 2mm

{\it Aknowledgements:}  We greatly appreciate Aldo Conca, Emanuela de Negri and Elisa Gorla for bringing to our attention their work on Cartwright-Sturmfels ideals
and, in particular, Conjecture \ref{conjH} during  a workshop in honor of Peter Schenzel held in Osnabr\"uck. We also want to thank Alberto F. Boix and Santiago Zarzuela for so many discussions about 
local cohomology spectral sequences we had over the last few years. We also thank the anonymous referee for the helpful comments on the manuscript.

  \section{Local cohomology spectral sequence} \label{spectral}

Let $A$ be a commutative Noetherian ring containing a field $\K$ and let $(\cP,\preccurlyeq )$ be a partially ordered set of ideals $I_p\subseteq A$ ordered by reverse inclusion. Namely, given $p,q\in \cP$ we have
$p \preccurlyeq q$ if and only if $I_p \supseteq I_q$. We will assume that there exists a maximal ideal $\fM$ of $A$ containing all the ideals in the poset. In the sequel, if no confusion arise, we will indistinctly  refer to the ideal $I_p\in \cP$ or the  element $p\in \cP$ for simplicity. 

\vskip 2mm

A formalism to produce local cohomology spectral sequences associated to a poset was given in \cite{ABZ}. The construction of these spectral sequences is technically involved so we are just going to sketch the main ideas behind its construction and refer to \cite{ABZ} for details.
Associated to the poset $\cP$ we consider the inverse system  of $A$-modules $A/[*]:=(A/I_p)_{p\in \cP}$.  Applying the $\fM$-torsion functor $\Gamma_\fM$ to this inverse system we obtain another inverse system  $\Gamma_\fM(A/[*]):=(\Gamma_\fM(A/I_p))_{p\in \cP}$.
The category of inverse systems of $A$-modules over a poset has enough injectives so we may consider the right derived inverse systems
$\mathbb{R}^j \Gamma_\fM(A/[*])$.  The spectral sequence given in \cite [Theorem 5.6]{ABZ} involves the right derived functors of 
the limit of this inverse system. In the form we are interested in this work it reads as:
$$E_2^{i,j}= \mathbb{R}^i \lim_{p\in \cP}\xymatrix{\mathbb{R}^j \Gamma_\fM(A/[*])\ar@{=>}[r]& }H_\fM^{i+j}\left(\lim_{p\in
\cP}A/I_p\right).$$
Notice that the spectral sequence starts with an inverse system of $A$-modules but converges to the usual local cohomology module which lives in the category of $A$-modules.

\vskip 2mm

The poset that we will consider throughout this work is what we call {\it the poset $\cP_I$ associated to a decomposition of the ideal} $I$.
Namely, given  an ideal $I\subseteq A$ admitting a decomposition $I=I_1\cap\ldots\cap I_n$ for certain ideals $I_1,\ldots ,I_n$ of $A$,  the poset $\cP_I$  is given by all the possible sums of the ideals in the decomposition. So, any ideal $I_p\in \cP_I$ is just a certain sum
of  ideals among $I_1,\ldots ,I_n$.  We point out that we have a lot of flexibility on the decomposition that we may consider and we will make a strong use of this fact. Most commonly we will use a minimal primary decomposition of $I$ which is the approach considered in \cite{ABZ} (see also \cite{AGZ}) when dealing with monomial ideals or arrangements of linear subspaces. For the case of binomial edge ideals  we are going to consider a poset $\cQ_I$ which is a refinement of $\cP_I$ that adjusts much better to our purposes (see Definition \ref{posetQ}). We should mention that the inverse systems  $(A/I_p)_{p\in \cP_I}$ and 
$(A/I_q)_{q\in \cQ_I}$ are cofinal so there will be no problem when computing their limit (see Remark \ref{cofinal}).

\vskip 2mm

At first sight the spectral sequence seems quite complicated but it becomes enormously simplified in the case that we consider some extra assumptions on the poset $\cP$, being either $\cP_I$ or  $\cQ_I$, associated to a decomposition of the ideal $I$. 

\vskip 2mm

{\bf Key assumptions:} 

\begin{itemize}
\item[(A1)]  $\cP$  is a subset of a distributive lattice of ideals of $A$.

\vskip 2mm

\item[(A2)]  $A/I_p$ is Cohen-Macaulay for all $p\in \cP$.

\vskip 2mm

\item[(A3)] $I_q$ is not contained in any minimal prime of $I_p$ for any $p\neq q$ with $\hlt (I_q) > \hlt (I_p)$. 

\end{itemize}

\vskip 2mm
Recall that ideals in $A$ form a lattice with the intersection and the sum. We say that it is a distributive lattice when both operations distribute over each other. For example, given a set of ideals $I_1,\ldots ,I_n$ of $A$, we may consider the lattice formed by all the sums and all the intersections of these ideals. In particular, the poset $\cP$ that we consider may be interpreted as a subset of such a lattice.
A lattice of monomial ideals is always distributive but we may find examples where this is not the case (see for instance \cite[Remark 5.2]{ABZ}). 

\vskip 2mm

Assumption  (A1) implies that $\lim_{p\in \cP} A/I_p =A/I$ (see \cite[Proposition 5.1]{ABZ}). On the other hand, Assumptions  (A2) and  (A3) ensure that the spectral sequence degenerates at the  $E_2$-page
(see \cite[Theorem 5.22]{ABZ}).  Actually, (A2) and (A3) are satisfied whenever $A/I_p$ are Cohen-Macaulay domains for all $p\in \cP$.  

\vskip 2mm

\begin{remark} \label{rmkA3}
The condition of Assumption (A3) as it appears in \cite[Theorem 5.22]{ABZ} only requires that $p\neq q$. However, if we take a close look at the proof of \cite[Lemma 5.21]{ABZ}  we can see that it is enough checking the condition when $\hlt (I_q) > \hlt (I_p)$ in the case that all the minimal primes of $I_p$ have the same height, which occurs when $A/I_p$ is Cohen-Macaulay. 
%
%
\end{remark}

\vskip 2mm

In order to check  Assumption (A2)  it will be useful to consider the following result, due to  Bouchiba and Kabbaj \cite[Theorem 2.1]{BK}, 
on the Cohen-Macaulayness of the tensor product (see also \cite{EGA} and \cite{Wat}).

  \begin{theorem} \cite[Theorem 2.1]{BK} \label{CMtensor}
  Let $A$ and $B$ be $\K$-algebras such that $A \otimes_\K B$ is Noetherian. Then, the following conditions
are equivalent:
   
   \begin{itemize}
    \item[i)] $A$ and  $B$ are Cohen-Macaulay.
    
    \item[ii)] $A \otimes_\K B$ is Cohen-Macaulay.
   \end{itemize}

  \end{theorem}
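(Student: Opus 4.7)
The plan is to reduce the theorem to a local statement and then apply the standard dimension/depth formulas for flat local homomorphisms. Since by definition $A$ and $B$ are Cohen-Macaulay precisely when all their localizations at maximal (equivalently, prime) ideals are so, I first fix a prime $P\subseteq A\otimes_\K B$ and set $\fp=P\cap A$, $\fq=P\cap B$. The goal is to decide when $(A\otimes_\K B)_P$ is Cohen-Macaulay in terms of $A_\fp$, $B_\fq$, and the ``fiber'' over $(\fp,\fq)$.

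The key mechanism is the following: one has natural flat maps
$$A_\fp \;\longrightarrow\; (A_\fp\otimes_\K B_\fq)_{P'} \;\longrightarrow\; (A\otimes_\K B)_P,$$
obtained by tensoring $B$ (flat over the field $\K$) with $A$, localizing at the appropriate multiplicatively closed sets, and observing that $A_\fp\otimes_\K B_\fq$ is the localization of $A\otimes_\K B$ at $(A\setminus\fp)\otimes(B\setminus\fq)$. Each arrow above is a flat local homomorphism (once we localize at the image of $P$), and recalling the classical formulas for such maps,
$$\dim S = \dim R + \dim(S/\fm_R S), \qquad \depth S = \depth R + \depth(S/\fm_R S),$$
I obtain that $S$ is Cohen-Macaulay if and only if both $R$ and the closed fiber $S/\fm_R S$ are Cohen-Macaulay. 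Iterating this criterion along the chain above reduces Cohen-Macaulayness of $(A\otimes_\K B)_P$ to Cohen-Macaulayness of the three pieces $A_\fp$, $B_\fq$, and the fiber $(\kappa(\fp)\otimes_\K \kappa(\fq))_{P''}$, where $P''$ is the contraction of $P$ to the fiber ring.

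With this reduction in hand, both directions become manageable. For (ii)$\Rightarrow$(i), given any maximal ideals $\fp\subseteq A$ and $\fq\subseteq B$, one picks a prime $P$ of $A\otimes_\K B$ lying over both (which exists by faithful flatness of $\K\to \kappa(\fp)\otimes_\K \kappa(\fq)$ over $\K$, whence over $A\otimes_\K B$); then the flatness formulas force $A_\fp$ and $B_\fq$ to be Cohen-Macaulay. For (i)$\Rightarrow$(ii), the Cohen-Macaulayness of $A_\fp$ and $B_\fq$ is given, so the only remaining obstruction is the fiber $(\kappa(\fp)\otimes_\K \kappa(\fq))_{P''}$.

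The main obstacle, and the crux of the argument, is therefore the statement that every localization of a tensor product of two field extensions of $\K$ at one of its prime ideals is Cohen-Macaulay. This is the key ``absolute'' lemma: such a ring $\kappa(\fp)\otimes_\K\kappa(\fq)$ can be realized as a filtered direct limit of finitely generated $\K$-subalgebras that are domains of the form $F\otimes_\K F'$ with $F,F'$ finitely generated subfields, and a Grothendieck-type argument (cf.\ \cite{EGA}, \cite{Wat}) shows each such localization is equidimensional and Cohen-Macaulay. Granting this lemma, the three-term criterion assembled above gives the equivalence at every prime $P$, and passing over all $P$ completes the proof.
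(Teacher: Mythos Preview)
The paper does not give a proof of this statement: it is quoted as \cite[Theorem 2.1]{BK} and used only as a black box (to conclude that $A/I_q$ and $A/\gin(I_q)$ are Cohen--Macaulay from the Cohen--Macaulayness of their tensor factors). There is therefore no proof in the paper to compare your proposal against.

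That said, your outline follows the standard route and is essentially the strategy of \cite{BK}. The reduction via the flat local homomorphism $A_\fp\to (A\otimes_\K B)_P$ and the depth/dimension formulas is correct and does reduce the problem to the Cohen--Macaulayness of the fibre $\big(\kappa(\fp)\otimes_\K\kappa(\fq)\big)_{P''}$. You rightly flag this fibre statement as the crux, but you do not actually prove it: the filtered-limit sketch you give is not quite an argument, and the references to \cite{EGA} and \cite{Wat} are vague. In \cite{BK} this is exactly the substantive step (building on earlier work of Sharp and of Watanabe et al.), so your proposal is correct in shape but incomplete precisely at the one non-formal point.
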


%
%
%

\vskip 2mm

Let $1_{\cP}$ be a terminal element that we add to the poset $\cP$. To any $p\in \cP$ we may consider the {\it order complex} associated to the  subposet
$(p, 1_{{\cP}}):=\{z\in \cP \mid\quad p < z < 1_{\cP}\}$ and its reduced simplicial cohomology  groups 
$\widetilde{H}^{i} ((p,1_{{\cP}}); \K)$. 
Under Assuptions {\rm (A1)}, {\rm (A2)} and {\rm (A3)},  the spectral sequence for local cohomology modules reads as follows:

\newpage

\begin{theorem}\cite [Theorem 5.22]{ABZ}\label{sucesion espectral}
Let $A$ be a commutative Noetherian ring containing a field $\K$ and let $\cP$ be a poset 
associated to a decomposition of an ideal $I\subseteq A$ satisfying Assuptions {\rm (A1)}, {\rm (A2)} and {\rm (A3)}.  Then, there exists a first quadrant spectral sequence of the form:
\[
E_2^{i,j}=\bigoplus_{j=d_p} H_{\mathfrak{m}}^{d_p}
\left(A/I_p\right)^{\oplus M_{i,p}}\xymatrix{ \ar@{=>}[r]_-{i}& }
H_{\mathfrak{m}}^{i+j} \left(A/I\right),
\]
where $M_{i,p}=\dim_{\K} \widetilde{H}^{i-d_p-1} ((p,1_{{\cP}});
\K)$ and $d_p=\dim A/I_p$. Moreover, this spectral sequence degenerates at the $E_2$-page.
\end{theorem}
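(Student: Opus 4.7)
The plan is to start from the general local cohomology spectral sequence of \cite[Theorem 5.6]{ABZ},
\[
E_2^{i,j} = \mathbb{R}^i \lim_{p\in \cP}\; \mathbb{R}^j \Gamma_\fM(A/[*]) \;\Longrightarrow\; H_\fM^{i+j}\bigl(\lim_{p\in \cP} A/I_p\bigr),
\]
and then simplify each ingredient using the three Key Assumptions. The abutment is handled first: Assumption (A1) places $\cP$ inside a distributive lattice of ideals, so \cite[Proposition 5.1]{ABZ} gives $\lim_{p\in \cP} A/I_p = A/I$, and the target is $H_\fM^{i+j}(A/I)$ as claimed.

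Next I would collapse the $j$-direction of the $E_2$-page using (A2). Since $A/I_p$ is Cohen--Macaulay of dimension $d_p$, one has $\mathbb{R}^j \Gamma_\fM(A/I_p) = H^j_\fM(A/I_p) = 0$ unless $j = d_p$. Consequently, in the inverse system $\mathbb{R}^j\Gamma_\fM(A/[*])$ the only nonzero terms live over the subposet $\cP_j := \{p\in \cP \mid d_p = j\}$. After this reduction one needs to compute $\mathbb{R}^i\lim$ of an inverse system supported on a single height stratum. The standard tool is to use, for each $p$, the combinatorial injective resolution of the corresponding ``skyscraper'' inverse system by the constant systems on upward-closed subsets of $\cP$, as carried out in \cite{ABZ}. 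A Baclawski/Gerstenhaber--Schack type computation then identifies $\mathbb{R}^i\lim$ of such a skyscraper with the reduced simplicial cohomology of the open interval above $p$. Assembling the contributions yields
\[
E_2^{i,j} = \bigoplus_{\substack{p\in \cP \\ d_p = j}} H_\fM^{d_p}(A/I_p)^{\oplus M_{i,p}}, \qquad M_{i,p} = \dim_\K \widetilde{H}^{i-d_p-1}\bigl((p,1_\cP);\K\bigr),
\]
where the shift $i - d_p - 1$ records the degree along the $p$-stratum and the conventional $-1$ of reduced cohomology.

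Finally, I would prove degeneration at $E_2$ using (A3). A higher differential $d_r\colon E_r^{i,j}\to E_r^{i+r,j-r+1}$ with $r\geq 2$ must send a summand indexed by $p$ with $d_p = j$ to a summand indexed by $q$ with $d_q = j-r+1$, so in particular $\hlt(I_q) > \hlt(I_p)$. Under (A2) all minimal primes of $I_p$ have the same height (Remark \ref{rmkA3}), and (A3) states that no such $I_q$ is contained in any of them. Unwinding the construction of the differentials from the poset filtration, they are induced by natural maps $H_\fM^{d_p}(A/I_p)\to H_\fM^{d_q}(A/(I_p+I_q))$; the incompatibility of heights and primes forces the intermediate local cohomology term to vanish, exactly as in \cite[Lemma 5.21, Theorem 5.22]{ABZ}, so $d_r = 0$.

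The main obstacle is the middle step: identifying $\mathbb{R}^i\lim$ on a subposet with the reduced cohomology of an order complex, and keeping the combinatorial bookkeeping of the degree shifts consistent across the different strata $\cP_j$. Once this is in place, the application of (A3) to kill the higher differentials is a height-counting argument that uses Remark \ref{rmkA3} in an essential way; reconstructing it in detail is a matter of tracing through the $\delta$-functor formalism of \cite{ABZ} rather than a new idea.
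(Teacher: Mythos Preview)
Your proposal is essentially correct and matches the paper's treatment: the paper does not give its own proof of this theorem but imports it from \cite[Theorem 5.22]{ABZ}, only sketching in the surrounding text exactly the three ingredients you isolate (the identification of the abutment via (A1) and \cite[Proposition 5.1]{ABZ}, the collapse of the $j$-direction under (A2), and degeneration under (A2)+(A3) via \cite[Lemma 5.21, Theorem 5.22]{ABZ}). Your account of the degeneration step, with the height inequality $d_q<d_p$ feeding into Remark \ref{rmkA3}, is also in line with what the paper records.
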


\vskip 2mm

From the degeneration of the spectral sequence we get that,  for each $0\leq r\leq\dim (A)$,
there is an increasing, finite filtration $\{H_i^r\}_{0\leq i \leq b}$  of
$H_{\mathfrak{m}}^r (A/I)$ by $A$-modules, or equivalently 
we have a collection of short exact sequences 
\begin{align*} \label{short_seq}
& \xymatrix{0\ar[r]& H_0^r\ar[r]& H_1^r\ar[r]& H_1^r/H_0^r\ar[r]& 0}\\
& \xymatrix{0\ar[r]& H_1^r\ar[r]& H_2^r\ar[r]& H_2^r/H_1^r\ar[r]& 0}\\
& \xymatrix{& {\hskip 4mm\vdots}& &{\hskip -8mm \vdots}& &{\hskip -15mm \vdots}& & }\\
&  \xymatrix{0\ar[r]& H_{b-1}^r\ar[r]& H_b^r\ar[r]& H_b^r/H_{b-1}^r\ar[r]& 0.}
\end{align*}

\noindent where $H_b^r=H_{\mathfrak{m}}^r (A/I)$ for some $b\in \bN$ and, for each $r$ the quotients $H_i^r/H_{i-1}^r$ 
decompose in the following manner:
\[
H_i^r/H_{i-1}^r\cong\bigoplus_{\{p\in \cP\ \mid\ r-i=d_p\}}
H_{\mathfrak{m}}^{d_p} \left(A/I_p\right)^{\oplus M_{i,p}}
\]

\vskip 2mm

These short exact sequences split as $\K$-vector spaces so we obtain
the following generalization of the celebrated Hochster's formula.

\begin{theorem}\label{Hochster}
Let $A$ be a commutative Noetherian ring containing a field $\K$ and let $\cP$ be a poset 
associated to an ideal $I\subseteq A$ satisfying Assuptions (A1), (A2) and (A3).  Then, 
there is a $\K$-vector space isomorphism
\[
H_{\mathfrak{m}}^{r} \left(A/I\right)\cong\bigoplus_{p\in \cP} H_{\mathfrak{m}}^{d_p} \left(A/I_p\right)^{\oplus M_{r,p}}
\]

\end{theorem}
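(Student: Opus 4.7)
The plan is to derive this Hochster-type isomorphism directly from the degeneration of the spectral sequence in Theorem \ref{sucesion espectral}, which has already been set up before the statement. In essence, the theorem is a bookkeeping consequence of the filtration produced by that spectral sequence, once one remembers that we are working in the category of $\K$-vector spaces.

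First, I would invoke Theorem \ref{sucesion espectral} to obtain a finite, increasing filtration $0 = H_{-1}^r \subseteq H_0^r \subseteq H_1^r \subseteq \cdots \subseteq H_b^r = H_\fM^r(A/I)$ together with the explicit description of its successive quotients
\[
H_i^r/H_{i-1}^r \;\cong\; \bigoplus_{\{p \in \cP \,\mid\, r-i = d_p\}} H_\fM^{d_p}(A/I_p)^{\oplus M_{i,p}}
\]
as supplied by the $E_\infty = E_2$ page of the degenerate spectral sequence.

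Second, since $A$ is a $\K$-algebra, every $A$-module appearing in this filtration is in particular a $\K$-vector space, and each of the short exact sequences
\[
\xymatrix{0 \ar[r] & H_{i-1}^r \ar[r] & H_i^r \ar[r] & H_i^r/H_{i-1}^r \ar[r] & 0}
\]
is an exact sequence of $\K$-vector spaces. Every such sequence splits over $\K$, so by induction on $i$ we obtain a $\K$-linear isomorphism
\[
H_\fM^r(A/I) \;\cong\; \bigoplus_{i=0}^{b} H_i^r/H_{i-1}^r.
\]

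Third, I substitute the description of the quotients and reorganize the resulting double direct sum by indexing over $p \in \cP$ rather than over the filtration step $i$. For each $p$, exactly one index $i$ contributes, namely $i = r - d_p$, and the multiplicity $M_{i,p}$ at that step is precisely the quantity denoted $M_{r,p}$ in the statement (that is, $\dim_\K \widetilde{H}^{r - d_p - 1}((p, 1_\cP); \K)$ after the appropriate reindexing convention). This yields the claimed isomorphism.

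There is no substantial obstacle here; the entire weight of the argument rests on Theorem \ref{sucesion espectral} and on the splitting of short exact sequences of $\K$-vector spaces. The only point worth being careful about is the indexing conventions when passing from the spectral sequence indexing $(i,p)$ to the intrinsic indexing by $p$ alone, together with the fact that this assembly produces only a $\K$-vector space isomorphism and not an $A$-module one (the filtration is not known to split as $A$-modules, which is why the statement is phrased over $\K$).
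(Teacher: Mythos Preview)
Your proposal is correct and follows exactly the same route as the paper: invoke the degenerate spectral sequence of Theorem \ref{sucesion espectral}, read off the filtration with the described successive quotients, and then split all the short exact sequences over $\K$ to assemble the direct sum. The paper states precisely this argument in the paragraph immediately preceding the theorem, and your write-up simply makes the induction and the reindexing from $M_{i,p}$ to $M_{r,p}$ (via $i=r-d_p$) more explicit.
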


It is proved in \cite[Sections 5.3 and 7.1]{ABZ}  that in the polynomial ring case we may add an enhanced structure to the spectral sequence  
considered in Theorem \ref{sucesion espectral}. In our case we want to consider a $\Z^m$-graded structure where $m\leq n$.
Then, the graded version of Theorem \ref{Hochster} reads as 
\[
H_{\mathfrak{m}}^{r} \left(A/I\right)_a\cong\bigoplus_{p\in \cP} H_{\mathfrak{m}}^{d_p} \left(A/I_p\right)_a^{\oplus M_{r,p}}
\]
for every $a\in \Z^m$.

\vskip 2mm

The original Hochster formula is nothing but a very particular case of Theorem \ref{Hochster}.

\begin{corollary}\label{Hochster_mono}
Let $A=\K[x_1,\dots, x_n]$ be a polynomial ring with coefficients over a field $\K$ and let $I\subseteq A$
be a squarefree monomial ideal. Let $\cP_I$ be the poset given by all the possible sums of the ideals in the minimal primary decomposition 
of $I$.  Then, 
there is a $\K$-vector space isomorphism
\[
H_{\mathfrak{m}}^{r} \left(A/I\right)\cong\bigoplus_{p\in \cP} H_{\mathfrak{m}}^{d_p} \left(A/I_p\right)^{\oplus M_{r,p}}.
\] Moreover we have a decomposition as graded  $\K$-vector spaces.
\end{corollary}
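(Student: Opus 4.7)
The plan is to deduce Corollary \ref{Hochster_mono} from Theorem \ref{Hochster} by verifying the three Key Assumptions (A1), (A2), (A3) in this particular setting. Since $I\subseteq A=\K[x_1,\dots,x_n]$ is a squarefree monomial ideal, its minimal primary decomposition has the form $I=P_1\cap \cdots \cap P_s$, where each $P_i$ is generated by a subset of the variables $\{x_1,\dots,x_n\}$. Thus every element of the poset $\cP_I$ is an ideal of the form $I_p=P_{i_1}+\cdots +P_{i_k}$, which is again generated by a subset of the variables, and in particular is itself a prime (monomial) ideal.

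First I would handle (A1): as noted in the excerpt, any lattice of monomial ideals under $+$ and $\cap$ is distributive, so $\cP_I$ embeds into such a distributive lattice and (A1) holds; this also guarantees $\lim_{p\in \cP_I} A/I_p=A/I$. Next, for (A2), since each $I_p$ is generated by variables, the quotient $A/I_p$ is isomorphic to a polynomial ring in the remaining variables over $\K$, hence Cohen-Macaulay. For (A3), observe that each $I_p\in \cP_I$ is prime, so $I_p$ is its own unique minimal prime. If $I_q\subseteq I_p$, then the set of variables generating $I_q$ is contained in the set generating $I_p$, forcing $\hlt(I_q)\leq \hlt(I_p)$. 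Therefore the hypothesis $\hlt(I_q)>\hlt(I_p)$ makes the containment $I_q\subseteq I_p$ impossible, so (A3) is satisfied vacuously.

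Having checked the three assumptions, I would invoke Theorem \ref{Hochster} directly to obtain the $\K$-vector space isomorphism
\[
H_{\mathfrak{m}}^{r} \left(A/I\right)\cong\bigoplus_{p\in \cP_I} H_{\mathfrak{m}}^{d_p} \left(A/I_p\right)^{\oplus M_{r,p}},
\]
with $M_{r,p}=\dim_{\K}\widetilde{H}^{r-d_p-1}((p,1_{\cP_I});\K)$ and $d_p=\dim A/I_p$. For the refinement to graded $\K$-vector spaces, I would apply the $\Z^n$-graded enhancement of the spectral sequence stated immediately after Theorem \ref{Hochster} (taking $m=n$), which is available because both $I$ and every $I_p$ are $\Z^n$-graded (monomial) ideals, so all maps in the spectral sequence preserve the multigrading and the degenerated filtration splits degree by degree.

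There is no real obstacle here; the only point requiring a moment of care is the verification of (A3), where one must use that each $I_p$ is itself prime (so the set of minimal primes of $I_p$ is just $\{I_p\}$) together with the observation that containment of ideals generated by variables is controlled by containment of the corresponding variable sets, making the hypothesis $\hlt(I_q)>\hlt(I_p)$ rule out the containment $I_q\subseteq I_p$ entirely. To recover the classical Hochster formula in its traditional form, one then identifies the order complex of $(p,1_{\cP_I})$ with the link (or an Alexander-dual simplicial complex) of the appropriate face, but this translation is not needed for the statement of the corollary as written.
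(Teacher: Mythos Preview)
Your proposal is correct and matches the paper's approach: the paper presents Corollary \ref{Hochster_mono} as an immediate specialization of Theorem \ref{Hochster} without giving an explicit proof, and your verification of (A1)--(A3) is exactly the routine check the paper leaves implicit (indeed, the paper even remarks that a lattice of monomial ideals is always distributive and that (A2)--(A3) hold whenever all $A/I_p$ are Cohen--Macaulay domains, which covers your argument). The only minor simplification you could make is to quote that remark directly instead of arguing (A3) by hand.
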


\section{Local cohomology modules of binomial edge ideals} \label{bei}

Let $A=\K[x_1,\dots, x_n, y_1,\dots, y_n]$ be a polynomial ring with coefficients over a field $\K$ and let $G$ be a graph on the vertex set $[n]=\{1,\dots, n\}$. The {\it binomial edge ideal} associated to $G$ is the ideal 
$$J_G= \langle \Delta_{ij}  \hskip 2mm | \hskip 2mm \{i,j\} \mbox{ is an edge of } G \rangle,$$ 
where $\Delta_{ij}=   x_iy_j-x_jy_i$ denote the corresponding $2$-minor of a  $2\times n$ matrix 

$$\left(
\begin{array}{cccc}
x_1 & x_2 & \cdots & x_n \\
y_1 & y_2 & \cdots & y_n 
\end{array}
\right)
$$


With the $\Z^n$-graded structure on $A$ given  by $\deg(x_i)=\deg(y_i)=e_i\in \Z^n$, where $e_i$ is the $i$-th unit vector, we have that $J_G$ is a $\Z^n$-graded ideal.
We may also associate the usual $\Z$-graded structure given  by $\deg(x_i)=\deg(y_i)=1$.

\vskip 2mm


Binomial edge ideals are radical ideals whose primary decomposition  is nicely described. Namely, given $S\subseteq [n]$, let $c=c(S)$ be the number of connected components of $G\setminus S$, that we denote by $G_1,\dots, G_c$, and  let $\widetilde{G}_i$ be the complete graph on the vertices of $G_i.$ We set
$$P_S(G)= \langle x_i,y_i \hskip 2mm | \hskip 2mm i\in S \rangle + J_{\widetilde{G}_1} +\cdots + J_{\widetilde{G}_{c}}$$
and  let $|S|$ be the number  of elements of $S$.

\begin{theorem}[{\cite{HHHKR}}]\label{IntersectionPrimes}
Let $G$ be a graph on  $[n]$, and $J_G$ its binomial edge ideal in $A$. Then we have:

\vskip 2mm

\begin{itemize}
\item[i)] $P_S(G)$ is a prime ideal  of height $n-c+|S|$ for every $S\subseteq [n]$.
\vskip 2mm

\item[ii)] We have a decomposition $$J_G=\bigcap_{S\subseteq [n]} P_S(G).$$

%

\vskip 2mm

\item[iii)]  If $G$ is a connected graph on $[n]$, then $P_S(G)$ is a minimal prime of $J_G$ if and only if either $S=\emptyset$ or $S\neq \emptyset$ and, for each $i\in S$, 
we have $c(S\setminus \{i\}) < c(S)$.

\end{itemize}

\end{theorem}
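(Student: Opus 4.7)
The plan is to attack the three parts sequentially, exploiting the fact that the generators of $P_S(G)$ decouple into the monomial ideal $\langle x_i,y_i\mid i\in S\rangle$ and determinantal ideals on pairwise disjoint sets of variables, one for each connected component $G_j$ of $G\setminus S$.

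For part (i), I would identify $A/P_S(G)$ with the $\K$-tensor product of the coordinate rings $R_j:=\K[x_i,y_i\mid i\in V(G_j)]/J_{\widetilde{G}_j}$ for $j=1,\ldots,c$, after killing the variables indexed by $S$. Each factor $R_j$ is the homogeneous coordinate ring of (the affine cone over) the Segre embedding of $\Proj^1\times\Proj^{n_j-1}$, hence a Cohen-Macaulay, absolutely irreducible variety of dimension $n_j+1$; in particular $J_{\widetilde{G}_j}$ has height $n_j-1$ and $R_j$ is a domain. Since each factor is geometrically integral, the tensor product remains a domain, so $P_S(G)$ is prime. The height equals $2|S|+\sum_j(n_j-1)=2|S|+(n-|S|)-c=n-c+|S|$.

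For part (ii), one containment is immediate: for any edge $\{i,j\}$ of $G$ and any $S\subseteq[n]$, either $i$ or $j$ lies in $S$ so that $\Delta_{ij}$ is in the monomial part of $P_S(G)$, or else $i$ and $j$ lie in a common component $G_k$ of $G\setminus S$ so that $\Delta_{ij}\in J_{\widetilde{G}_k}$, giving $J_G\subseteq\bigcap_S P_S(G)$. For the reverse inclusion, I would first argue that $J_G$ is radical by exhibiting a squarefree initial ideal under a suitable lexicographic order, as done in \cite{HHHKR}, so that $J_G$ equals the intersection of its minimal primes. Then I would show every minimal prime $\fp$ of $J_G$ has the form $P_S(G)$: set $S:=\{i\mid x_i,y_i\in\fp\}$, so that $\fp\supseteq\langle x_i,y_i\mid i\in S\rangle$; using the Pl\"ucker-type identity $x_b\Delta_{ac}=x_a\Delta_{bc}+x_c\Delta_{ab}$ (and its $y$-analogue), one propagates the membership $\Delta_{a_0 a_s}\in\fp$ along any path $a_0,a_1,\ldots,a_s$ inside a component of $G\setminus S$, since each intermediate $a_{s-1}$ lies outside $S$ and hence $x_{a_{s-1}}\notin\fp$ or $y_{a_{s-1}}\notin\fp$. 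This yields $P_S(G)\subseteq\fp$, and minimality of $\fp$ together with primality of $P_S(G)$ forces $\fp=P_S(G)$.

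For part (iii), the argument hinges on controlled comparisons between the $P_T$'s. Suppose first that $c(S\setminus\{i\})\geq c(S)$ for some $i\in S$: then $i$ has neighbors in at most one component of $G\setminus S$, so the only new generators of $P_{S\setminus\{i\}}$ compared to $P_S$ are minors $\Delta_{ia}=x_iy_a-x_ay_i$, which already lie in $\langle x_i,y_i\rangle\subseteq P_S$; hence $P_{S\setminus\{i\}}\subsetneq P_S$ and $P_S$ is not minimal. Conversely, assume $c(S\setminus\{i\})<c(S)$ for every $i\in S$, and let $T\neq S$. If $T\not\subseteq S$ pick $i\in T\setminus S$: then $x_i\in P_T$, but its image in the domain $A/P_S$ is a nonzero Segre coordinate, so $x_i\notin P_S$. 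If $T\subsetneq S$, choose $i\in S\setminus T$; by hypothesis $i$ is adjacent in $G$ to vertices $a,b$ lying in distinct components $G_k,G_l$ of $G\setminus S$, so $a$ and $b$ become connected in $G\setminus T$ through $i$ and thus $\Delta_{ab}\in P_T$, while in the tensor product $A/P_S$ the image of $\Delta_{ab}$ equals $\overline{x_a}\otimes\overline{y_b}-\overline{y_a}\otimes\overline{x_b}$, nonzero since $\{x_a,y_a\}$ and $\{x_b,y_b\}$ are $\K$-linearly independent in the respective determinantal factors. In either case $P_T\not\subseteq P_S$.

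The main obstacle I foresee is the radicality of $J_G$ invoked in part (ii); the remaining steps are a mixture of determinantal algebra and the combinatorics of connectivity. The standard route to radicality is a direct Gr\"obner basis computation showing that the natural binomial generators form a Gr\"obner basis with squarefree leading monomials under a suitable lex order, which immediately yields radicality and clears the way for the prime identification above.
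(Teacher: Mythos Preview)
The paper does not give its own proof of this statement: Theorem~\ref{IntersectionPrimes} is quoted verbatim from \cite{HHHKR} and used as a black box. So there is no ``paper's proof'' to compare against; what matters is whether your outline is sound.

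Your proposal is essentially the original argument of \cite{HHHKR}, and the steps are correct. A couple of minor remarks. In part~(ii), the radicality of $J_G$ via a squarefree initial ideal is indeed the key external input, and you have flagged it appropriately; the Pl\"ucker propagation along a path in $G\setminus S$ works because for every intermediate vertex $a_k\notin S$ at least one of $x_{a_k},y_{a_k}$ lies outside $\fp$ by your very definition of $S$, so one of the two identities $x_b\Delta_{ac}=x_a\Delta_{bc}+x_c\Delta_{ab}$ or $y_b\Delta_{ac}=y_a\Delta_{bc}+y_c\Delta_{ab}$ can always be divided. In part~(iii), the non-vanishing of $\Delta_{ab}$ in $A/P_S$ when $a,b$ lie in distinct components is exactly the elementary fact that $v_1\otimes w_1-v_2\otimes w_2\neq 0$ in a tensor product of $\K$-vector spaces whenever $\{v_1,v_2\}$ and $\{w_1,w_2\}$ are each linearly independent, which holds here since the determinantal relations are quadratic; and the reduction ``it suffices to rule out $P_T\subsetneq P_S$'' is justified because you have already shown in~(ii) that every minimal prime of $J_G$ is of the form $P_T$. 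With these clarifications the argument goes through.
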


 \subsection{A poset associated to a binomial edge ideal} Let $J_G$ be the binomial edge ideal associated to a graph $G$  on the set of vertices $[n]$.
 Associated to the minimal primary decomposition 
$$J_G=P_{S_1}(G) \cap \cdots \cap P_{S_r}(G)$$ 
we may consider the poset $\cP_{J_G}$ given by all the possible sums of the ideals in the decomposition ordered by reverse inclusion. 
An interesting feature that occurs in the case of monomial ideals or arrangement of linear subspaces is that the corresponding poset only contains prime ideals.
This is no longer true for the case of binomial edge ideals as the following example shows.
%
%

\vskip 2mm

\begin{example} \label{path}
Let $\K[x_1,\dots, x_5, y_1,\dots, y_5]$ be a polynomial ring with coefficients over a field $\K$ and 
let $J_G$ be the binomial edge ideal associated to the $5$-path  $G:=\{ \{1,2\}, \{2,3\},\{3,4\},\{4,5\} \} $. 
Its minimal primary decomposition is $J_G=I_1\cap I_2\cap I_3\cap I_4 \cap I_5$ where:

\vskip 2mm

$I_1:= P_{\emptyset}(G)= \langle  \Delta_{12},  \Delta_{13}, \Delta_{14}, \Delta_{15},  \Delta_{23}, \Delta_{24}, \Delta_{25},  \Delta_{34}, \Delta_{35},  \Delta_{45} \rangle$

$I_2:= P_{\{2\}}(G)= \langle x_2,y_2, \Delta_{34}, \Delta_{35},  \Delta_{45}  \rangle$

$I_3:= P_{\{3\}}(G)= \langle x_3,y_3, \Delta_{12},   \Delta_{45}  \rangle$

$I_4:= P_{\{4\}}(G)= \langle x_4,y_4, \Delta_{12},  \Delta_{13}, \Delta_{23}  \rangle$

$I_5:= P_{\{2,4\}}(G)= \langle x_2,y_2, x_4, y_4  \rangle$

\vskip 2mm

\noindent We have that $I_2+I_4+I_5= \langle x_2,y_2, x_4, y_4,  \Delta_{13}, \Delta_{35} \rangle$  admits a primary decomposition of the form
$$I_2+I_4+I_5= \langle x_2,y_2, x_4, y_4,  \Delta_{13}, \Delta_{15}, \Delta_{35} \rangle \cap \langle x_2,y_2, x_3,y_3, x_4, y_4  \rangle. $$
We point out that these prime components as well as its sum $\langle x_2,y_2, x_3,y_3,x_4, y_4,  \Delta_{15}\rangle$ are indeed elements of the poset $\cP_{J_G}$. 
The complete list of ideals in the poset $\cP_{J_G}$ contains $I_1, I_2, I_3, I_4, I_5$ and  $\langle x_2,y_2,  \Delta_{13}, \Delta_{14}, \Delta_{15}, \Delta_{34}, \Delta_{35},  \Delta_{45}  \rangle$, 
$\langle x_3,y_3,  \Delta_{12}, \Delta_{14}, \Delta_{15}, \Delta_{24}, \Delta_{25},  \Delta_{45}  \rangle$,
$\langle x_2,y_2, x_3, y_3, \Delta_{45}  \rangle$,
$\langle x_2,y_2, x_4, y_4, \Delta_{13}  \rangle$,
$\langle x_4,y_4, \Delta_{12},  \Delta_{13}, \Delta_{15}, \Delta_{23},  \Delta_{25},  \Delta_{35} \rangle$,
$\langle x_2,y_2, x_4, y_4, \Delta_{35}  \rangle$,
$\langle x_3,y_3, x_4, y_4, \Delta_{12}  \rangle$,
{\color{blue} $\langle x_2,y_2, x_4, y_4,  \Delta_{13}, \Delta_{35} \rangle$},
$\langle x_2,y_2, x_3,y_3, x_4, y_4  \rangle$,
$\langle x_2,y_2, x_3, y_3,  \Delta_{14}, \Delta_{15}, \Delta_{45} \rangle$,
$\langle x_2,y_2, x_4, y_4,  \Delta_{13}, \Delta_{15}, \Delta_{35} \rangle$,
$\langle x_3,y_3, x_4, y_4,  \Delta_{12}, \Delta_{15}, \Delta_{25} \rangle$,
$\langle x_2,y_2, x_3,y_3, x_4, y_4, \Delta_{15}  \rangle$.

\vskip 2mm

All the ideals in the poset, except for the one presented in blue, are prime ideals of the form $P_S(H)$ for some graph $H$ in the vertex set $[n]$
and some $S\subseteq [n]$.

\end{example}

The fact that some elements of the poset $\cP_{J_G}$ may not be prime ideals is not a big issue in the computation of the corresponding local cohomology modules. Indeed, the building blocks $ H_{\mathfrak{m}}^{d_p} \left(A/I_p\right)$  associated to non prime ideals appearing in the  formula given in Theorem \ref{Hochster} admit some further decomposition applying Theorem \ref{Hochster} as many times as necessary.
%
%
%
 Is for this reason that it seems more natural to associate the following poset to the minimal primary decomposition of the ideal ${J_G}$ containing only prime ideals and being cofinal with $\cP_{J_G}$. 
 
\begin{definition} \label{posetQ}
Let $J_G$ be the binomial edge ideal associated to a graph $G$ on the set of vertices $[n]$. We construct iteratively a poset  $\cQ_{J_G}$ associated to $J_G$ as follows:  The ideals contained  in $\cQ_{J_G}$ are the prime ideals in $\cP_{J_G}$,   the prime ideals in the posets $\cP_{I_q}$ of sums of the ideals in the decomposition of every non prime ideal $I_q$ in $\cP_{J_G}$ and the prime ideals that we obtain repeating this procedure every time we find a non prime ideal. 
%
%
This process stops after a finite number of steps since we only have a finite number of prime ideals of the form $P_S(H)$ for some graph $H$ on the  set of vertices  $[n]$ and some $S\subseteq [n]$.

\end{definition}

\begin{remark}
In Example \ref{path},  we have that the poset $\cQ_{J_G}$ contains all the ideals in $\cP_{J_G}$ except for {\color{blue} $\langle x_2,y_2, x_4, y_4,  \Delta_{13}, \Delta_{35} \rangle$}.
\end{remark}

\vskip 2mm

Before going on, we will fix some notation that will be useful in the rest of this work.

\vskip 2mm

\begin{notation} \label{notation}
 Let $I_q$ be a prime ideal in the poset $\cQ_{J_G}$. Assume that it has the form
$$I_q=P_S(H)= \langle x_i,y_i \hskip 2mm | \hskip 2mm i\in S \rangle + J_{\widetilde{H}_1} +\cdots + J_{\widetilde{H}_{c_q}},$$
where $H$ is a graph in the set of vertices $[n]$, $S\subseteq [n]$,  $H_1,\dots, H_{c_q}$ are the connected components of $H\setminus S$ and $\widetilde{H}_i$ denotes the complete graph on the vertices of $H_i$.  
Let $n_{i,q}$ be the number of vertices of $H_i$, then we may rename and reorder the variables $x_i, y_i$  in such a way that

\vskip 2mm 

$\langle x_i,y_i \hskip 2mm | \hskip 2mm i\in S \rangle\subseteq A_0:=\K[x_j,y_j \hskip 2mm | \hskip 2mm j\in S]=\K[x_1^0,\dots , x_{|S|}^0,y_1^0,\dots , y_{|S|}^0]$

\vskip 2mm 

\noindent and, for $i=1,\dots,c_q$

\vskip 2mm 

$J_{\widetilde{H}_i} \subseteq A_i:= \K[x_j, y_j \hskip 2mm | \hskip 2mm j \in H_i]=\K[x_1^i,\dots , x_{n_{i,q}}^i,y_1^i,\dots , y_{n_{i,q}}^i]$.

\vskip 2mm 

\noindent In particular, we have $$A=A_0 \otimes_\K A_1  \otimes_\K \cdots \otimes_\K A_{c_q}$$   
$$A/I_q =  A_1/J_{\widetilde{H}_1}  \otimes_\K \cdots \otimes_\K A_{c_q}/J_{\widetilde{H}_{c_q}} .$$
Notice that $d_q:= \dim A/I_q = n - |S| +c =d_1+ \cdots + d_{c_q}$, where  $d_i:= \dim A_i/J_{\widetilde{H}_i }= n_{i,q} + 1$. 
In the sequel we will  denote by $\fM_i$ the homogeneous maximal ideal of $A_i$.

\end{notation}


\subsection{Local cohomology of binomial edge ideals}
In order to  apply Theorem \ref{Hochster} for binomial edge ideals we have to make sure that the  poset $\cQ_{J_G}$ that we constructed in Definition \ref{posetQ} satisfies Assumptions (A1), (A2) and (A3).

\vskip 2mm

Recall that the ideal of $2\times 2$-minors of a $2\times n$-matrix is Cohen-Macaulay and thus  $A/I_q $ is also Cohen-Macaulay by Theorem \ref{CMtensor}. Therefore Assumptions (A2) and (A3) are satisfied for the poset $\cQ_{J_G}$ since it only contains Cohen-Macaulay prime ideals. 
In order to prove Assumption (A1) we will first do the following

\begin{discussion} \label{discussion}
 Let $I_{q_1}$ and $I_{q_2}$ be a prime ideals in the poset $\cQ_{J_G}$ of the form
$$I_{q_1}=P_{S_1}(H_1)= \langle x_i,y_i \hskip 2mm | \hskip 2mm i\in S_1 \rangle + J_{\widetilde{H}_{1,1}} +\cdots + J_{\widetilde{H}_{1,c_{q_1}}},$$
$$I_{q_2}=P_{S_2}(H_2)= \langle x_j,y_j \hskip 2mm | \hskip 2mm j\in S_2 \rangle + J_{\widetilde{H}_{2,1}} +\cdots + J_{\widetilde{H}_{2,c_{q_2}}},$$
for some graphs $H_1$ and $H_2$ in the set of vertices $[n]$. In order to study the intersection $I_{q_1}\cap I_{q_2}$ we notice that both ideals are sums of ideals in different sets of separated variables and thus we may reduce the problem to the study of the intersection of either two 
prime monomial ideals, a prime monomial ideal and a binomial edge ideal of a complete graph or the intersection of two binomial edge ideal of  complete graphs. In these cases we have the following description of the generators in terms of the corresponding graphs:

$\cdot$ A minimal set of generators of $  \langle x_i,y_i \hskip 2mm | \hskip 2mm i\in S_1 \rangle \cap  \langle x_i,y_i \hskip 2mm | \hskip 2mm i\in S_2 \rangle$ only contains elements 
\begin{itemize}
\item[$\cdot$] $x_i$ and $y_i$ for any common vertex $i\in S_1 \cap S_2$.

\item[$\cdot$] $ x_ix_j$, $x_iy_j$, $y_ix_j$ and  $y_iy_j$ for any pair of vertices  $i\in S_1 \setminus S_2$ and $j\in S_2 \setminus S_1$.
\end{itemize}


 $\cdot$ A minimal set of generators of $\langle x_i,y_i \hskip 2mm | \hskip 2mm i\in S_1 \rangle \cap J_{\widetilde{H}_{2,t}}$  only  contains elements 
 \begin{itemize}
\item[$\cdot$]  $\Delta_{i j}$ whenever
 the edge $\{i,j\}$ belongs to the complete graph $\widetilde{H}_{2,t}$ and either $i$ or $j$ is a vertex of $S_1$.

\item[$\cdot$] $x_i \Delta_{k\ell}$ and $y_i \Delta_{k\ell}$ whenever $i \in S_1$ and the edge $\{k,\ell\}$ belongs to $\widetilde{H}_{2,t}$ but $k,\ell \not \in S_1$.
\end{itemize}

\hskip .3cm Analogously for $\langle x_i,y_i \hskip 2mm | \hskip 2mm i\in S_2 \rangle \cap J_{\widetilde{H}_{1,s}}$.


$\cdot$ A minimal set of generators of $J_{\widetilde{H}_{1,s}} \cap J_{\widetilde{H}_{2,t}} $ only contains elements 
 \begin{itemize}
\item[$\cdot$]  $\Delta_{i j}$ for common edges $\{i,j\}$.

\item[$\cdot$]  $\Delta_{i j} \Delta_{k\ell}$ for edges $\{i,j\}$ in $\widetilde{H}_{1,s}$ and $\{k,\ell\}$ in $\widetilde{H}_{2,t}$ sharing at most one vertex.

\item[$\cdot$] $  x_j y_\ell \Delta_{i k} - x_\ell y_k  \Delta_{i j}=  x_i y_ k \Delta_{j \ell} - x_j y_i  \Delta_{k \ell} $ for edges $\{i,j\}$ in $\widetilde{H}_{1,s}$, $\{k,\ell\}$ in $\widetilde{H}_{2,t}$ and $\{j,k\}$ being a common edge.

\end{itemize}


\end{discussion}

Assumption (A1) follows from the following 

  \begin{proposition}
 Let $J_G$ be the binomial edge ideal associated to a graph $G$  on the set of vertices $[n]$. Then, the poset $\cQ_{J_G}$ is a subset of a distributive lattice.
  \end{proposition}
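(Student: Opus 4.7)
The plan is to encode each prime ideal $I_q = P_S(H) \in \cQ_{J_G}$ by the combinatorial pair $(S, \Pi_q)$, where $S \subseteq [n]$ is the set of ``monomial vertices'' and $\Pi_q$ is the partition of $[n]\setminus S$ whose blocks are the vertex sets of the connected components $H_1,\dots,H_{c_q}$. Since each such $H_i$ is complete (by the way $\cQ_{J_G}$ is constructed from iterated sums of sums of the $P_{S_j}(G)$), the pair $(S,\Pi_q)$ recovers $I_q$ completely. The goal is to exhibit a concrete distributive lattice $\mathcal{L}$ of ideals of $A$ containing $\cQ_{J_G}$.

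First I would describe sums combinatorially: given $I_{q_1} = P_{S_1}(H_1)$ and $I_{q_2} = P_{S_2}(H_2)$, adding the generators $\{x_i, y_i : i\in S_j\}$ absorbs those vertices into the monomial part, and two binomial-edge ideals of complete graphs on overlapping vertex sets sum to the binomial-edge ideal of the complete graph on the union. Hence $I_{q_1} + I_{q_2}$ is again of the form $P_{S_1\cup S_2}(H)$, corresponding to the pair $(S_1\cup S_2,\, \Pi_1|_{[n]\setminus(S_1\cup S_2)} \vee \Pi_2|_{[n]\setminus(S_1\cup S_2)})$ where $\vee$ is the join in the partition lattice. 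In particular sums stay inside $\cQ_{J_G}$.

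Next I would analyze intersections using the explicit generator descriptions from Discussion \ref{discussion}. Although $I_{q_1}\cap I_{q_2}$ is generally not prime, it is generated by elements of three controlled types: products of the variables $x_i,y_i$, products or common minors $\Delta_{ij}$, and mixed elements such as $x_i\Delta_{k\ell}$ or $x_j y_\ell\Delta_{ik}-x_\ell y_k\Delta_{ij}$. The point is that each of these generators is either a generator of some $I_r\in\cQ_{J_G}$ or lies in a product of two elements of $\cQ_{J_G}$, so the whole lattice $\mathcal{L}$ closed under $+$ and $\cap$ is determined by combinatorial data on the underlying vertex set $[n]$.

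Finally, for distributivity I would verify the non-trivial inclusion $I_1\cap(I_2+I_3)\subseteq (I_1\cap I_2)+(I_1\cap I_3)$ by a generator-level argument. A key structural feature that helps is that the summands $\langle x_i,y_i\rangle$ and $J_{\widetilde H_s}$ composing each $P_S(H)$ live in pairwise disjoint sets of variables (Notation \ref{notation}), so the lattice operations respect a decomposition of $A$ as a tensor product; the distributive law for variable-disjoint pieces reduces to the classical distributivity of ideals generated by variables in a polynomial ring together with the rank-one case of $2\times 2$ minors. The main obstacle will be the mixed-type generators of the form $x_j y_\ell\Delta_{ik}-x_\ell y_k\Delta_{ij}$ that arise when the two intersecting binomial-edge ideals share exactly one vertex: here one must carefully rewrite such an element modulo each $I_1\cap I_j$ using the complete-graph structure to exhibit the desired sum decomposition. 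Once this case analysis is complete, $\cQ_{J_G}$ sits inside the distributive lattice $\mathcal{L}$, and Assumption (A1) follows.
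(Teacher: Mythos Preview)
Your overall strategy---reduce the distributive law $(I_{q_1}+I_{q_2})\cap I_{q_3}\subseteq (I_{q_1}\cap I_{q_3})+(I_{q_2}\cap I_{q_3})$ to a generator-by-generator check using the explicit descriptions in Discussion \ref{discussion}---is exactly the route the paper takes. But there is a genuine error in your first step that would derail the argument as written.

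You claim that if $\widetilde{H}_1$ and $\widetilde{H}_2$ are complete graphs on overlapping vertex sets then $J_{\widetilde{H}_1}+J_{\widetilde{H}_2}$ is the binomial edge ideal of the complete graph on the union, and hence that sums stay inside $\cQ_{J_G}$. This is false. Take $\widetilde{H}_1$ complete on $\{1,2,3\}$ and $\widetilde{H}_2$ complete on $\{3,4,5\}$: the sum is $\langle \Delta_{12},\Delta_{13},\Delta_{23},\Delta_{34},\Delta_{35},\Delta_{45}\rangle$, which does \emph{not} contain $\Delta_{14}$; it is the binomial edge ideal of the (non-complete) union graph, and it is not prime. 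The paper's own Example \ref{path} already exhibits such non-prime sums explicitly, and the very definition of $\cQ_{J_G}$ is designed to throw those ideals away. So your encoding by pairs $(S,\Pi)$ does not survive the lattice closure, and the tensor-product reduction you outline (which relies on each summand being a complete-graph ideal in its own block of variables) does not apply to $I_{q_1}+I_{q_2}$.

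The fix, which is what the paper does, is to accept that $I_{q_1}+I_{q_2}=\langle x_i,y_i\mid i\in S_1\cup S_2\rangle+J_{K_1}+\cdots+J_{K_c}$ with the $K_t$ \emph{not necessarily complete}, and then observe that a minimal generating set of $(I_{q_1}+I_{q_2})\cap I_{q_3}$ is nevertheless contained in the intersection of the larger prime ideal $\langle x_i,y_i\mid i\in S_1\cup S_2\rangle+J_{\widetilde{K}_1}+\cdots+J_{\widetilde{K}_c}$ with $I_{q_3}$, so that the generator types from Discussion \ref{discussion} still exhaust the possibilities. One then runs through each type and checks that every edge appearing in a $K_t$ already comes from $H_1$ or $H_2$, placing the generator in $(I_{q_1}\cap I_{q_3})+(I_{q_2}\cap I_{q_3})$. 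Once you drop the incorrect closure claim and work with non-complete $K_t$'s this way, your case analysis lines up with the paper's.
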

  
  \begin{proof}
  Given  prime ideals $I_{q_1}, I_{q_2}, I_{q_3}$ in the poset $\cQ_{J_G}$ we have to check 
  $$(I_{q_1} + I_{q_2})\cap I_{q_3} = ( I_{q_1} \cap I_{q_3}) +  ( I_{q_2} \cap I_{q_3}).$$ The inclusion  $\supseteq$ always holds so let's prove the other inclusion.  Assume that 
$$I_{q_1}=P_{S_1}(H_1)= \langle x_i,y_i \hskip 2mm | \hskip 2mm i\in S_1 \rangle + J_{\widetilde{H}_{1,1}} +\cdots + J_{\widetilde{H}_{1,c_{q_1}}},$$
$$I_{q_2}=P_{S_2}(H_2)= \langle x_j,y_j \hskip 2mm | \hskip 2mm j\in S_2 \rangle + J_{\widetilde{H}_{2,1}} +\cdots + J_{\widetilde{H}_{2,c_{q_2}}},$$
$$I_{q_3}=P_{S_3}(H_3)= \langle x_j,y_j \hskip 2mm | \hskip 2mm j\in S_3\rangle + J_{\widetilde{H}_{3,1}} +\cdots + J_{\widetilde{H}_{3,c_{q_3}}}. $$
 Recall that $I_{q_1} + I_{q_2}$ is not necessarily a prime ideal. Indeed,  we have a presentation
$$I_{q_1} + I_{q_2}= \langle x_i,y_i \hskip 2mm | \hskip 2mm i\in S_1\cup S_2 \rangle + J_{{K}_1} +\cdots + J_{{K}_{c}},$$ 
where  $K_t$ are non necessarily complete graphs corresponding to  the connected components of $K\setminus S_1 \cup S_2$ with $K$ being a graph in $[n]$. A minimal set of generators of $(I_{q_1} + I_{q_2})\cap I_{q_3}$   is contained in the intersection of the prime ideal $\langle x_i,y_i \hskip 2mm | \hskip 2mm i\in S_1\cup S_2 \rangle + J_{\widetilde{K}_1} +\cdots + J_{\widetilde{K}_{c}}$ with $ I_{q_3}$. In particular these elements have the form considered in Discussion \ref{discussion}. 
  
  \vskip 2mm
  
 Now,  let $u\in (I_{q_1} + I_{q_2})\cap I_{q_3}$ be an element in a  minimal set of generators. 
  
\begin{itemize}  
\item[$\cdot$] If $u=x_i$ or $y_i$,   then $i\in S_1\cup S_2$ and  $i\in S_3$ and thus $u\in ( I_{q_1} \cap I_{q_3}) +  ( I_{q_2} \cap I_{q_3})$.

\vskip 2mm
  
\item[$\cdot$] If $u$ is $x_ix_j$, $x_iy_j$, $y_ix_j$ or $y_iy_j$ then $i\in S_1\cup S_2 \setminus S_3$ and $j\in S_3 \setminus S_1\cup S_2$ and thus  $u\in ( I_{q_1} \cap I_{q_3}) +  ( I_{q_2} \cap I_{q_3})$.

\vskip 2mm
 
\item[$\cdot$] If $u=\Delta_{ij} $ it could be the case that the edge $\{i,j\}$ belongs to ${K}_{t}$, and consequently is an edge of $H_1$
or $H_2$, and either $i$ or $j$ is a vertex of $S_3$. It could also be the way around, $\{i,j\}$ belongs to $H_3$ and  either $i$ or $j$ is a vertex of $S_1 \cup S_2$.
It may also happen that  $\{i,j\}$ is a common edge of ${K}_{t}$ and $H_3$. In any case we have $u\in ( I_{q_1} \cap I_{q_3}) +  ( I_{q_2} \cap I_{q_3})$.

\vskip 2mm
 
\item[$\cdot$] If $u$ is $x_i \Delta_{k\ell}$ or $y_i \Delta_{k\ell}$ then $i \in S_1\cup S_2$ and the edge $\{k,\ell\}$ belongs to $H_3$ but $k,\ell \not \in S_1\cup S_2$ or the way around, $i \in S_3$ and the edge $\{k,\ell\}$ belongs to ${K}_{t}$ but $k,\ell \not \in S_3$.  We have $u\in ( I_{q_1} \cap I_{q_3}) +  ( I_{q_2} \cap I_{q_3})$.

\vskip 2mm
  
\item[$\cdot$] If $u=\Delta_{ij} \Delta_{k\ell}$ then the edges $\{i,j\}$ in $K_{t}$ and $\{k,\ell\}$ in $H_3$ share at most one vertex. Since 
$\{i,j\}$ belongs to $H_1$ or $H_2$, we  have $u\in ( I_{q_1} \cap I_{q_3}) +  ( I_{q_2} \cap I_{q_3})$.

\vskip 2mm

\item[$\cdot$] If $u= x_j y_\ell \Delta_{i k} - x_\ell y_k  \Delta_{i j}=  x_i y_ k \Delta_{j \ell} - x_j y_i  \Delta_{k \ell} $ then $\{i,j\}$ is an edge in $K_t$ and consequently an edge of $H_1$ or $H_2$, $\{k,\ell\}$ in $H_3$ and $\{j,k\}$ is a common edge. We  have $u\in ( I_{q_1} \cap I_{q_3}) +  ( I_{q_2} \cap I_{q_3})$.

\end{itemize} 
  \end{proof}
  
  \begin{remark} \label{cofinal}
The prime ideals in the minimal primary decomposition of the binomial edge ideal $J_G$ are the maximal elements of the
posets $\cQ_{J_G}$ and $\cP_{J_G}$. 
One deduces that the inverse systems $( A/I_q)_{q\in \cQ_{J_G}}$   and $( A/I_p)_{p\in \cP_{J_G}}$  are cofinal. 
In particular
$$\lim_{q\in \cQ_{J_G}} A/I_q =\lim_{p\in \cP_{J_G}} A/I_p.$$
This limit is isomorphic to $A/I$ under the conditions of Assumption (A1).
\end{remark}

Now we are ready to present the main result of this work which is a Hochster's type decomposition for the local cohomology modules associated to a binomial edge ideal.

\begin{theorem}\label{main}
Let $A=\K[x_1,\dots, x_n, y_1,\dots, y_n]$ be a polynomial ring with coefficients over a field $\K$ and $\fM$ be its homogeneous maximal ideal. Let $J_G \subseteq A$ be the binomial edge ideal associated to a graph $G$ in the set of vertices $[n]$. Let $\cQ_{J_G}$ be the poset associated to a minimal primary decomposition of $J_G$. Then, the local cohomology modules with respect to $\fM$ of $A/J_G$ admit the following decomposition as $\K$-vector spaces:
\[
H_{\mathfrak{m}}^{r} \left(A/J_G\right)\cong\bigoplus_{q\in \cQ_{J_G}} H_{\mathfrak{m}}^{d_q} \left(A/I_q\right)^{\oplus M_{r,q}}
\]
where $M_{r,q}=\dim_{\K} \widetilde{H}^{r-d_q-1} ((q,1_{{\cQ_{J_G}}});\K).$ Moreover we have a decomposition as graded $\K$-vector spaces.
\end{theorem}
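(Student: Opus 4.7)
The plan is to invoke Theorem \ref{Hochster} directly with $\cP = \cQ_{J_G}$, so the task reduces to verifying the three key assumptions and checking that the inverse limit over $\cQ_{J_G}$ computes $A/J_G$. First, I would dispose of the limit: by Remark \ref{cofinal} the maximal elements of both $\cP_{J_G}$ and $\cQ_{J_G}$ are the minimal primes appearing in the decomposition of $J_G$, so the inverse systems $(A/I_q)_{q\in\cQ_{J_G}}$ and $(A/I_p)_{p\in\cP_{J_G}}$ are cofinal and share the same limit, which equals $A/J_G$ once (A1) is in place.

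Assumption (A1) is exactly the content of the proposition proved immediately before the theorem, so nothing further is needed there. For (A2), observe that by construction every $I_q \in \cQ_{J_G}$ is a prime ideal of the form $P_S(H)$ for some graph $H$ on $[n]$ and some $S\subseteq [n]$; using Notation \ref{notation} we have the tensor decomposition
\[
A/I_q \cong A_1/J_{\widetilde{H}_1} \otimes_\K \cdots \otimes_\K A_{c_q}/J_{\widetilde{H}_{c_q}},
\]
where each factor is the coordinate ring of the variety of $2\times 2$ minors of a generic $2\times n_{i,q}$ matrix, a classical Cohen--Macaulay determinantal ring. Iterated application of Theorem \ref{CMtensor} shows that $A/I_q$ itself is Cohen--Macaulay.

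For (A3), every $I_q\in\cQ_{J_G}$ is prime, hence its own unique minimal prime. If $I_q \subseteq I_p$ for distinct primes in the poset, then primality forces $\h(I_q) \leq \h(I_p)$, contradicting the hypothesis $\h(I_q) > \h(I_p)$ in the formulation of (A3) given by Remark \ref{rmkA3}. Therefore (A3) holds. With (A1), (A2), (A3) and the limit identification in hand, Theorem \ref{Hochster} yields the desired $\K$-vector space isomorphism with the multiplicities $M_{r,q}$ given by the reduced simplicial cohomology of the open intervals $(q,1_{\cQ_{J_G}})$.

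It remains to upgrade the decomposition to a graded one. Since $J_G$ and every $I_q \in \cQ_{J_G}$ are $\Z^n$-homogeneous ideals (each $P_S(H)$ is generated by variables and by $2$-minors $\Delta_{ij}$, all of which are homogeneous for the grading $\deg(x_i)=\deg(y_i)=e_i$), the enhanced $\Z^m$-graded version of the spectral sequence constructed in \cite[Sections 5.3 and 7.1]{ABZ} applies, and passing to graded pieces through the filtration that emerges from the $E_2$-degeneration yields the graded $\K$-vector space decomposition. The main obstacle has in fact already been handled in the proposition above, namely the intersection analysis of Discussion \ref{discussion} needed to establish distributivity; once (A1) is available, the rest of the argument is a formal application of the machinery, the only delicate point being that the iterative construction of $\cQ_{J_G}$ keeps us in the realm of primes of the form $P_S(H)$ so that the Cohen--Macaulay tensor argument continues to apply uniformly across the poset.
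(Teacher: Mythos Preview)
Your proof is correct and follows essentially the same approach as the paper: the paper also verifies (A1) via the distributivity proposition, observes that (A2) and (A3) follow because every $A/I_q$ is a Cohen--Macaulay domain (the determinantal pieces are Cohen--Macaulay and Theorem~\ref{CMtensor} handles the tensor product), invokes Remark~\ref{cofinal} for the limit identification, and then applies Theorem~\ref{Hochster} together with the graded enhancement from \cite[Sections 5.3 and 7.1]{ABZ}. Your justification of (A3) is slightly more explicit than the paper's one-line remark that primality plus Cohen--Macaulayness suffices, but the content is identical.
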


\begin{example}
Let $A=\K[x_1,\dots, x_8, y_1,\dots, y_8]$ be a polynomial ring with coefficients over a field $\K$ and let
$J_G \subseteq A$ be the binomial edge ideal associated to the complete bipartite graph $G=K_{3,5}$.
Its minimal primary decomposition is $J_G= I_{p_1} \cap I_{p_2} \cap I_{p_3}$ where

$I_{p_1}=\langle x_1,x_2,x_3,x_4,x_5,y_1,y_2,y_3,y_4,y_5\rangle$

$I_{p_2}=\langle \Delta_{12}, \dots , \Delta_{78}\rangle$

$I_{p_3}=\langle x_6,x_7,x_8,y_6,y_7,y_8\rangle$

\vskip 2mm

\noindent The associated poset $\cQ_{J_G}$ has the form 
 $${\tiny{\xymatrix
{   I_{p_1}  \ar@{-}[d] & I_{p_2} \ar@{-}[dr]\ar@{-}[dl]& I_{p_3}  \ar@{-}[d] \\ I_{q_1} \ar@{-}[dr]& & I_{q_2} \ar@{-}[dl] \\ &I_{q_3}& }}}$$
where 

\vskip 2mm

$I_{q_1}=  \langle x_1,x_2,x_3,x_4,x_5,y_1,y_2,y_3,y_4,y_5, \Delta_{67}, \Delta_{68}, \Delta_{78} \rangle$

$I_{q_2}=  \langle x_6,x_7,x_8,y_6,y_7,y_8, \Delta_{12},  \Delta_{13}, \Delta_{14}, \Delta_{15},  \Delta_{23}, \Delta_{24}, \Delta_{25},  \Delta_{34}, \Delta_{35},  \Delta_{45} \rangle$

$I_{q_3}=  \langle x_1,x_2,x_3,x_4,x_5,x_6,x_7,x_8, y_1,y_2,y_3,y_4,y_5,y_6,y_7,y_8 \rangle$

\vskip 2mm

\noindent Then:
$$\begin{tabular}{|c|c|c|c|c|c|}\hline
$q$ &  $(q,1_{{\cQ_{J_G}}})$ & $\dim _k \tilde{H}^{-1} $ & $\dim _k
\tilde{H}^{0} $
 & $\dim _k \tilde{H}^{1} $\\
 \hline  $p_1$  & $\emptyset$ & 1 & - & -\\
  $p_2$ &  $\emptyset$ & $1$ & - & -\\
  $p_3$ &  $\emptyset$ & $1$ & - & -\\
  $q_1$ &   $\bullet \bullet $ & - & $1$& -  \\ 
  $q_2$ &   $\bullet \bullet $ & - & $1$& -  \\
  $q_3$ &   $ { \mid}{\hskip -.5mm \diagup\diagdown} { \hskip -.6mm \mid}$ & - &  - & -  \\ \hline
\end{tabular}
$$

\vskip 2mm

\noindent It follows from Theorem \ref{main} that 

\vskip 2mm

$H_\fM^{5}(A/J_G) = H_\fM^{4}(A/I_{q_1}),$ \hskip 2mm  $H_\fM^6(A/J_G) = H_\fM^6(A/I_{p_1})$, \hskip 2mm $H_\fM^7(A/J_G) = H_\fM^{6}(A/I_{q_2})$,

\vskip 2mm

$H_\fM^9(A/J_G) = H_\fM^9(A/I_{p_2})$, \hskip 2mm  $H_\fM^{10}(A/J_G) = H_\fM^{10}(A/I_{p_3})$.

\vskip 2mm

\noindent In particular, there are five local cohomology modules different from zero so $A/J_G$ is not Cohen-Macaulay.
 
\end{example}

From the vanishing of local cohomology modules we may deduce a simple criterion for the Cohen-Macaulayness of a binomial edge ideal in terms of the associated poset.

\begin{corollary} \label{CM}
Let $A=\K[x_1,\dots, x_n, y_1,\dots, y_n]$ be a polynomial ring with coefficients over a field $\K$ and $\fM$ be its homogeneous maximal ideal. 
Let $J_G \subseteq A$ be the binomial edge ideal associated to a graph $G$ in the set of vertices $[n]$. Let $\cQ_{J_G}$ be the poset associated to a 
minimal primary decomposition of $J_G$. Then the following are equivalent:
\begin{itemize}
 \item[i)] $J_G$ is Cohen-Macaulay.
 \item[ii)] $M_{r,q}=\dim_{\K} \widetilde{H}^{r-d_q-1} ((q,1_{{\cQ_{J_G}}});\K)=0$ for all $r\neq \dim A/J_G$ and all $q\in \cQ_{J_G}$.
\end{itemize}
\end{corollary}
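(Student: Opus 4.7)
The plan is to deduce this directly from the Hochster-type decomposition in Theorem \ref{main}. Recall that $A/J_G$ is Cohen-Macaulay if and only if $H^r_\fM(A/J_G)=0$ for every $r\neq \dim A/J_G$ (the top local cohomology is non-zero by Grothendieck's non-vanishing theorem, so there is nothing to check at $r=\dim A/J_G$). Thus the question is reduced to reading off when every direct summand in the decomposition of Theorem \ref{main} vanishes.

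The implication (ii) $\Rightarrow$ (i) is immediate: if $M_{r,q}=0$ for all $q\in\cQ_{J_G}$ and all $r\neq\dim A/J_G$, then the Hochster formula gives $H^r_\fM(A/J_G)=0$ for such $r$, so $A/J_G$ is Cohen-Macaulay.

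For the converse (i) $\Rightarrow$ (ii), the key point I need is the non-vanishing of each building block $H^{d_q}_\fM(A/I_q)$. By construction every $I_q\in\cQ_{J_G}$ is a prime of the form $P_S(H)$, so using Notation \ref{notation} we have a tensor product presentation $A/I_q\cong A_1/J_{\widetilde{H}_1}\otimes_\K\cdots\otimes_\K A_{c_q}/J_{\widetilde{H}_{c_q}}$ (up to the variables in $S$, which disappear from the quotient). Each factor $A_i/J_{\widetilde{H}_i}$ is a $2\times n_{i,q}$ generic determinantal ring, hence a Cohen-Macaulay domain, and by Theorem \ref{CMtensor} the tensor product $A/I_q$ is a Cohen-Macaulay ring of dimension $d_q>0$. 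Grothendieck non-vanishing then yields $H^{d_q}_\fM(A/I_q)\neq 0$ for every $q\in\cQ_{J_G}$.

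Now assume $J_G$ is Cohen-Macaulay. For any $r\neq \dim A/J_G$, Theorem \ref{main} gives
\[
0=H^r_\fM(A/J_G)\;\cong\;\bigoplus_{q\in\cQ_{J_G}} H^{d_q}_\fM(A/I_q)^{\oplus M_{r,q}},
\]
where for $r>\dim A/J_G$ the left-hand side vanishes by Grothendieck vanishing (using $d_q\leq\dim A/J_G$ since each $I_q\supseteq J_G$). Since each summand $H^{d_q}_\fM(A/I_q)$ is non-zero, every multiplicity $M_{r,q}$ must be zero, giving (ii). The only subtlety in the argument — and the part that required the detour through the tensor product presentation — is ensuring the non-vanishing of the top local cohomology of each $A/I_q$; everything else is a direct bookkeeping on the Hochster-type formula.
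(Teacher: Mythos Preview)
Your argument is correct and is exactly the route the paper intends: the corollary is stated immediately after Theorem \ref{main} with only the remark that it follows ``from the vanishing of local cohomology modules'', and your write-up is the natural unpacking of that sentence. One tiny inaccuracy: you assert $d_q>0$, but $\fM$ itself can occur in $\cQ_{J_G}$ (see the $K_{3,5}$ example, where $I_{q_3}=\fM$), so $d_q=0$ is possible; this does not affect your conclusion since $H^0_\fM(A/\fM)=\K\neq 0$, so Grothendieck non-vanishing still gives $H^{d_q}_\fM(A/I_q)\neq 0$ in every case.
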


\vskip 2mm

Analogously we can also provide a criterion for the Buchsbaumness of a binomial edge ideal.

\vskip 2mm

\begin{corollary} \label{Buch}
Let $A=\K[x_1,\dots, x_n, y_1,\dots, y_n]$ be a polynomial ring with coefficients over a field $\K$ and $\fM$ be its homogeneous maximal ideal. 
Let $J_G \subseteq A$ be the binomial edge ideal associated to a graph $G$ in the set of vertices $[n]$. Let $\cQ_{J_G}$ be the poset associated to a 
minimal primary decomposition of $J_G$. Then the following are equivalent:
\begin{itemize}
 \item[i)] $J_G$ is Buchsbaum.
 \item[ii)] $M_{r,q}=\dim_{\K} \widetilde{H}^{r-d_q-1} ((q,1_{{\cQ_{J_G}}});\K)=0$ for all $r\neq \dim A/J_G$ and all $q\in \cQ_{J_G}$ such that $I_q \neq \fM$.
\end{itemize}
\end{corollary}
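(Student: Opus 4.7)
The plan is to combine the $A$-module filtration of $H^r_{\fM}(A/J_G)$ arising from the degeneration of the spectral sequence of Theorem \ref{sucesion espectral} with the following dichotomy for the building blocks. For each $q \in \cQ_{J_G}$, the quotient $A/I_q$ is a Cohen--Macaulay graded domain of dimension $d_q$, and its top local cohomology $H^{d_q}_{\fM}(A/I_q)$ is annihilated by $\fM$ if and only if $d_q = 0$, that is, precisely when $I_q = \fM$. Indeed, graded local duality identifies $H^{d_q}_{\fM}(A/I_q)$ (up to shift) with the graded Matlis dual of the canonical module $\omega_{A/I_q}$, and such a dual is $\fM$-annihilated exactly when $A/I_q$ has Krull dimension zero. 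I will also use the standard characterisation that $A/J_G$ is Buchsbaum if and only if $\fM \cdot H^r_{\fM}(A/J_G) = 0$ for every $r < \dim A/J_G$; for $r > \dim A/J_G$ one has $H^r_{\fM}(A/J_G) = 0$, which forces $M_{r,q} = 0$ automatically, so condition (ii) has content only at $r < \dim A/J_G$.

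For the direction $(i) \Rightarrow (ii)$, I would suppose $A/J_G$ is Buchsbaum and fix $r < \dim A/J_G$. Since $\fM$ annihilates $H^r_{\fM}(A/J_G)$, it annihilates every $A$-module subquotient. From the filtration displayed after Theorem \ref{sucesion espectral}, each nonzero summand $H^{d_q}_{\fM}(A/I_q)^{\oplus M_{r,q}}$ appears as a direct summand of the quotient $H_{r-d_q}^r / H_{r-d_q-1}^r$ and therefore as a subquotient of $H^r_{\fM}(A/J_G)$. The dichotomy above then forces $M_{r,q} = 0$ whenever $I_q \neq \fM$.

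For the converse $(ii) \Rightarrow (i)$, I would fix $r < \dim A/J_G$ and examine the filtration $\{H_i^r\}$. The quotient $H_i^r / H_{i-1}^r$ is controlled by those $p \in \cQ_{J_G}$ with $d_p = r - i$: for $i < r$ every such $p$ has $d_p > 0$, so $I_p \neq \fM$ and the hypothesis kills the multiplicity $M_{r,p}$; for $i > r$ no $p$ is allowed since $d_p \geq 0$; and at $i = r$ the only possible contribution comes from the element $q_0 \in \cQ_{J_G}$ (if any) with $I_{q_0} = \fM$. Consequently the filtration collapses to a single (possibly empty) nonzero step and delivers an $A$-module isomorphism $H^r_{\fM}(A/J_G) \cong \K^{\oplus M_{r,q_0}}$, or $0$ if no such $q_0$ exists. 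This module is manifestly annihilated by $\fM$, so $A/J_G$ is Buchsbaum.

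The main subtlety to keep track of will be the $A$-module structure, as opposed to only the $\K$-vector space decomposition provided by Theorem \ref{main}: the forward direction relies on the elementary fact that subquotients of an $\fM$-annihilated module remain $\fM$-annihilated, while the reverse direction exploits that when every quotient of a finite filtration vanishes except one, the filtration collapses to an honest $A$-module isomorphism onto that surviving quotient.
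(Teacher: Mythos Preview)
The paper offers no proof beyond the word ``Analogously'', so your approach is presumably what the author had in mind, and the implication $(i)\Rightarrow(ii)$ is correct as written: Buchsbaum implies $\fM$ annihilates each $H^r_{\fM}(A/J_G)$ for $r<d:=\dim A/J_G$, hence every $A$-module subquotient in the filtration of Theorem~\ref{sucesion espectral}, and since $H^{d_q}_{\fM}(A/I_q)$ is Artinian but not of finite length when $d_q>0$, it cannot be $\fM$-annihilated and the corresponding multiplicities vanish.

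There is, however, a genuine gap in $(ii)\Rightarrow(i)$. The ``standard characterisation'' you invoke --- that $A/J_G$ is Buchsbaum iff $\fM\cdot H^r_{\fM}(A/J_G)=0$ for all $r<d$ --- is in fact the definition of \emph{quasi}-Buchsbaum, and there exist graded $\K$-algebras that are quasi-Buchsbaum but not Buchsbaum. Your collapse-of-filtration argument does establish an $A$-module isomorphism $H^r_{\fM}(A/J_G)\cong\K^{M_{r,q_0}}$, concentrated in internal degree $0$, for every $r<d$; this yields quasi-Buchsbaum but not Buchsbaum directly. To close the gap you can invoke Schenzel's criterion: $A/J_G$ is Buchsbaum iff the truncation $\tau_{<d}R\Gamma_{\fM}(A/J_G)$ is formal in $D(A)$. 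Working in the derived category of \emph{graded} $A$-modules, the obstructions to formality lie in the internal-degree-$0$ part of $\Ext^j_A(\K,\K)$ for $j\geq 2$; the Koszul resolution of $\K$ over $A$ shows that $\Ext^j_A(\K,\K)$ sits entirely in internal degree $-j$, so these obstructions vanish, the truncated complex splits, and Buchsbaum follows. (The same issue is implicit in the paper's one-word ``proof''.)
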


\vskip 2mm

Using Theorem \ref{main} and some results obtained by Conca and Herzog in \cite{CH} we are able to provide a formula for the $\Z$-graded Hilbert series.


\vskip 2mm

Let $HS(M;t):= \sum_{a\in \Z} \dim_{\K} (M_a) \hskip 1mm t^a $ be the $\Z$-graded Hilbert series of a $\Z$-graded module $M$ such that 
$\dim_{\K} (M_a) < +\infty$ for all $a\in \Z$. In the case that $M$ is the local cohomology of a binomial edge ideal we have:

\begin{theorem}\label{Hilbert}
Let $A=\K[x_1,\dots, x_n, y_1,\dots, y_n]$ be a polynomial ring with coefficients over a field $\K$ and $\fM$ be its homogeneous maximal ideal. Let $J_G \subseteq A$ be the binomial edge ideal associated to a graph $G$ in the set of vertices $[n]$. Let $\cQ_{J_G}$ be the poset associated to a minimal primary decomposition of $J_G$. Then
\begin{align*}
 HS\left(H_{\mathfrak{m}}^r \left(A/J_G\right);t\right) & =   \sum_{\substack{ q\in \cQ_{J_G} \\ I_q \hskip 2mm {\it monomial}}}  M_{r,q}  \hskip 2mm  \frac{t^{-d_q}}{(1- t^{-1})^{d_q}}\\
& + \sum_{\substack{ q\in \cQ_{J_G} \\ I_q \hskip 2mm {\it not \hskip 1mm monomial}}}   M_{r,q}  \hskip 2mm  
\frac{\prod_{i=1}^{c_q} ((n_{i,q}-1) t^{-n_{i,q}} + t^{-(n_{i,q}+1)})}{(1-t^{-1})^{d_q}} 
\end{align*}
where 
$M_{r,q}=\dim_{\K} \widetilde{H}^{r-d_q-1} ((q,1_{\cQ_{J_G}});
\K).$ 
\end{theorem}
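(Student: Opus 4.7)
The plan is to combine the graded decomposition in Theorem \ref{main} with the multiplicativity of Hilbert series under tensor products, and then invoke a known Hilbert series for the $2 \times m$ generic determinantal ring. The starting point is that the isomorphism of $\K$-vector spaces in Theorem \ref{main} is compatible with the $\Z$-grading, so Hilbert series is additive over the direct sum and
$$HS(H^r_\mathfrak{M}(A/J_G); t) = \sum_{q \in \cQ_{J_G}} M_{r,q} \cdot HS(H^{d_q}_\mathfrak{M}(A/I_q); t).$$
The problem therefore reduces to computing $HS(H^{d_q}_\mathfrak{M}(A/I_q); t)$ for each prime $I_q$ of the poset.

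The next step is to exploit the tensor product decomposition recorded in Notation \ref{notation}. Every prime $I_q \in \cQ_{J_G}$ gives $A/I_q = A_1/J_{\widetilde{H}_1} \otimes_\K \cdots \otimes_\K A_{c_q}/J_{\widetilde{H}_{c_q}}$, where each factor is a Cohen-Macaulay $\K$-algebra of dimension $d_i = n_{i,q}+1$. A standard Künneth-type argument for top local cohomology, verifiable directly on the \v{C}ech complex because the lower local cohomology of each factor vanishes, yields a graded isomorphism
$$H^{d_q}_\mathfrak{M}(A/I_q) \cong \bigotimes_{i=1}^{c_q} H^{d_i}_{\mathfrak{M}_i}(A_i/J_{\widetilde{H}_i}),$$
whence by multiplicativity of Hilbert series,
$$HS(H^{d_q}_\mathfrak{M}(A/I_q); t) = \prod_{i=1}^{c_q} HS(H^{d_i}_{\mathfrak{M}_i}(A_i/J_{\widetilde{H}_i}); t).$$

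To finish I would compute each Hilbert series on the right. Since $\widetilde{H}_i$ is the complete graph on $n_{i,q}$ vertices, the ring $A_i/J_{\widetilde{H}_i}$ is the coordinate ring of the generic $2 \times n_{i,q}$ determinantal variety and, by the Conca-Herzog results of \cite{CH}, has Hilbert series $(1+(n_{i,q}-1)t)/(1-t)^{n_{i,q}+1}$. Combining this with graded local duality in the form $HS(H^{d_i}_{\mathfrak{M}_i}(R); t) = HS(\omega_R; t^{-1})$ and the standard identity $HS(\omega_R; t) = (-1)^{d_i}\, HS(R; 1/t)$ for a Cohen-Macaulay graded $\K$-algebra, a direct manipulation gives
$$HS(H^{d_i}_{\mathfrak{M}_i}(A_i/J_{\widetilde{H}_i}); t) = \frac{(n_{i,q}-1)t^{-n_{i,q}} + t^{-(n_{i,q}+1)}}{(1-t^{-1})^{d_i}}.$$
Observe that this formula already includes the polynomial ring case $n_{i,q}=1$, for which it collapses to $t^{-2}/(1-t^{-1})^2 = HS(H^2_{\mathfrak{M}_i}(\K[x,y]); t)$.

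Assembling the product and using $\sum_i d_i = d_q$ gives the stated expression for non-monomial $I_q$. When $I_q$ is monomial, every component $H_i$ is an isolated vertex with $n_{i,q}=1$, so every factor is a polynomial ring in two variables and the product telescopes into $t^{-d_q}/(1-t^{-1})^{d_q}$, yielding the first formula. The only subtle step I would single out is the Künneth identity for the top local cohomology modules of the factors; all other ingredients are either known from \cite{CH} or formal manipulations with Hilbert series.
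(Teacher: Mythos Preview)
Your proof is correct and follows essentially the same route as the paper: decompose via Theorem~\ref{main}, reduce to each $I_q$, and then feed in the Conca--Herzog Hilbert series together with graded local duality. The one organizational difference is that you invoke a K\"unneth isomorphism $H^{d_q}_{\fM}(A/I_q)\cong\bigotimes_i H^{d_i}_{\fM_i}(A_i/J_{\widetilde{H}_i})$ and then take Hilbert series, whereas the paper stays entirely at the level of Hilbert series: it uses the trivially multiplicative identity $HS(A/I_q;t)=\prod_i HS(A_i/J_{\widetilde{H}_i};t)$, converts to $HS(\omega_{A/I_q};t)$ via $(-1)^{d_q}HS(A/I_q;t^{-1})$, applies Conca--Herzog factorwise, and only then passes to $HS(H^{d_q}_{\fM}(A/I_q);t)$ by local duality. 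This sidesteps the K\"unneth step altogether, so the paper's argument is marginally more elementary; your version, in exchange, makes transparent that the monomial case is simply the degenerate instance $n_{i,q}=1$ of the general formula, a unification the paper treats separately via Hochster's formula.
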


\begin{proof}
From the decomposition given in Theorem \ref{main}
and the additivity of the Hilbert series with respect to short exact sequences we have
\[
HS\left(H_{\mathfrak{m}}^r \left(A/J_G\right);t\right)=\sum_{q\in \cQ_{J_G}}  M_{r,q}  \hskip 1mm HS\left(H_{\mathfrak{m}}^{d_q}
\left(A/I_q\right);t\right),
\]

If $I_q=\langle x_i, y_i \hskip 2mm | \hskip 2mm i\in S \rangle$ is a monomial ideal, then, using Hochster's formula we have $$HS\left(H_{\mathfrak{m}}^{d_q}
\left(A/I_q\right);t\right)= \left( \frac{t^{-1}}{1- t^{-1}}\right)^{d_q}.$$

If $I_q$ is not a monomial ideal, we assume  that
$A/I_q =  A_1/J_{\widetilde{H}_1}  \otimes_\K \cdots \otimes_\K A_c/J_{\widetilde{H}_{c_q}} .$
Given the relation given by the Hilbert series of a Cohen-Macaulay ring and its canonical module we get
\begin{align*}
HS\left(\omega_{A/I_q} ;t\right) &=(-1)^{d_q} HS\left(A/I_q ;t^{-1}\right) = (-1)^{d_q} \prod_{i=1}^{c_q} HS\left(A_i/J_{\widetilde{H}_i}; t^{-1}\right)\\
&= (-1)^{d_q} \prod_{i=1}^{c_q} (-1)^{d_i} HS\left(\omega_{A_i/J_{\widetilde{H}_i}};t\right) =\prod_{i=1}^{c_q} HS\left(\omega_{A_i/ J_{\widetilde{H}_i}};t\right)\\
&= \prod_{i=1}^{c_q} \frac{t^{n_{i,q}}((n_{i,q}-1)  + t)}{(1-t)^{d_i}}
\end{align*}
where the last assertion follows from \cite{CH}. Then, taking into account that graded local duality reverses the degrees, we get
$$HS\left(H_{\mathfrak{m}}^{d_q} 
\left(A/I_q \right);t\right)= \prod_{i=1}^{c_q} HS\left(H_{\mathfrak{m_i}}^{d_i}\left(A_i/J_{\widetilde{H}_i}\right);t\right)
= \prod_{i=1}^{c_q} \frac{ (t^{-1})^{n_{i,q}}((n_{i,q}-1) + t^{-1})}{(1-t^{-1})^{d_i}} $$
and the result follows.
\end{proof}

\begin{example}
Let $A=\K[x_1,\dots, x_8, y_1,\dots, y_8]$ be a polynomial ring with coefficients over a field $\K$ and let
$J_G \subseteq A$ be the binomial edge ideal associated to the complete bipartite graph $G=K_{3,5}$. Then

\vskip 2mm

$HS\left( H_\fM^{5}(A/J_G) ;t\right) = HS\left(H_\fM^{4}(A/I_{q_1});t\right)=\frac{ 2t^{-3}+t^{-4}}{(1-t^{-1})^4},$ 

\hskip 2mm  

$HS\left(H_\fM^6(A/J_G) ;t\right) = HS\left(H_\fM^6(A/I_{p_1});t\right)= \frac{t^{-6}}{( 1- t^{-1})^6} $, 

\hskip 2mm 

$HS\left(H_\fM^7(A/J_G) ;t\right) = HS\left(H_\fM^{6}(A/I_{q_2});t\right)= \frac{4t^{-5}+t^{-6}}{(1-t^{-1})^6} $,

\vskip 2mm  

$HS\left(H_\fM^9(A/J_G) ;t\right) = HS\left(H_\fM^9(A/I_{p_2});t\right)=\frac{7t^{-8}+t^{-9}}{(1-t^{-1})^9}$, 

\hskip 2mm  

$HS\left(H_\fM^{10}(A/J_G) ;t\right) = HS\left(H_\fM^{10}(A/I_{p_3});t\right)=\frac{t^{-10}}{ (1- t^{-1})^{10}}$.

\vskip 2mm

\end{example}

\section{Local cohomology modules of generic initial ideals of binomial edge ideals} \label{sec-gin}

Let $J_G \subseteq \K[x_1,\dots,x_n, y_1,\dots,y_n]$ be the binomial edge ideal associated to a graph $G$ on $[n]$. 
A precise description of the corresponding generic initial ideal $\gin(J_G)$ has been given by Conca, De Negri and Gorla  in \cite{CDG4}
in order to prove that binomial edge ideals belong to the class of {\it Cartwright-Sturmfels ideals} introduced in \cite{CS}. Namely, we have:

\begin{theorem} \cite[Theorem 2.1]{CDG4}\label{mainbinedge}
Let $J_G$ be the binomial edge ideal associated to a graph $G$ on $[n]$ .
Then, the $\bZ^n$-graded generic initial ideal of $J_G$ is 
$$\gin(J_G):= \langle  x_ix_j y_{a_1}\cdots y_{a_v}  \hskip 2mm | \hskip 2mm  \{i, a_1,  \cdots,  a_v, j\}  \hskip 2mm 
\mbox{is a path in} \hskip 2mm G \rangle.$$
\end{theorem}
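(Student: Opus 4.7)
The plan is to compute the $\bZ^n$-graded generic initial ideal by applying a generic element of the multigraded Borel group $B = \prod_{i=1}^n B_i \subset \prod_{i=1}^n \mathrm{GL}_2(\K)$ that acts on each pair $(x_i, y_i)$, and then taking the initial ideal with respect to a lexicographic term order refining the multigrading, say $x_1 > \dots > x_n > y_1 > \dots > y_n$. Write $\tilde{f}$ for the image under a generic $g \in B$ of $f \in A$, normalized so that each factor $B_i$ sends $x_i \mapsto a_i x_i$ and $y_i \mapsto b_i x_i + c_i y_i$ for generic scalars $a_i, b_i, c_i$.

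For the containment $\supseteq$ in $\gin(J_G)$, I would proceed by induction on the path length. For an edge $\{i,j\}$ with $i<j$, a direct computation gives
$$\tilde{\Delta}_{ij} = (a_i b_j - a_j b_i)\, x_i x_j + a_i c_j\, x_i y_j - a_j c_i\, x_j y_i,$$
whose leading monomial is $x_i x_j$, so $x_i x_j \in \gin(J_G)$. For longer paths $i = b_0 - b_1 - \dots - b_{v+1} = j$ in $G$, I would build a Gr\"obner basis element with leading monomial $x_{b_0} x_{b_{v+1}} y_{b_1} \cdots y_{b_v}$ by forming the $S$-polynomial of the elements associated to the two end-sub-paths $b_0 - \dots - b_v$ and $b_1 - \dots - b_{v+1}$ (available inductively) and then reducing modulo the Gr\"obner basis elements already produced for shorter paths. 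The genericity of the Borel parameters ensures that after reduction the intended path-monomial survives as the leading term.

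For the reverse containment $\subseteq$, I would compare multigraded Hilbert series. Since the multigraded Hilbert series is preserved by taking the generic initial ideal, it suffices to show that the candidate monomial ideal $M$ on the right-hand side of the claim has the same multigraded Hilbert series as $J_G$. Using the minimal primary decomposition $J_G = \bigcap_{S \subseteq [n]} P_S(G)$ from Theorem \ref{IntersectionPrimes}, inclusion-exclusion yields the multigraded Hilbert series of $A/J_G$. On the monomial side, the standard monomials of $A/M$ in each multidegree admit a combinatorial enumeration in terms of vertex subsets avoiding the supports of paths in $G$; matching the two counts completes the argument.

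The main obstacle is the reduction step in the inductive $S$-polynomial procedure for long paths: after forming each $S$-pair, one must reduce modulo the already constructed Gr\"obner basis to ensure that the new leading monomial is genuinely the intended path-monomial rather than something divisible by a shorter path-monomial (a risk, since a naive $S$-pair of the two inductive elements will typically have leading monomial $x_{b_0} x_{b_1} y_{\text{inner}} x_{b_{v+1}}$, dominated in the chosen lex order by path-monomials coming from sub-paths). Controlling this reduction precisely, and verifying that the genericity of the Borel parameters prevents any crucial coefficient from vanishing or producing an unexpected cancellation, is the delicate point.
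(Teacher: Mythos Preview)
The paper does not give its own proof of this statement: Theorem~\ref{mainbinedge} is quoted verbatim from \cite[Theorem~2.1]{CDG4} and used as a black box, so there is no proof in the paper to compare your proposal against.

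As for your sketch on its own merits, the overall two-step strategy (produce the path monomials as leading terms, then close the argument by a Hilbert-series comparison) is sound and is in the spirit of how such results are established. However, both halves are left at the level of intentions. In the $\supseteq$ direction you correctly flag the difficulty: the naive $S$-pair of the two end-sub-path elements does not immediately yield the desired leading monomial, and the reduction you allude to has to be carried out carefully and uniformly in the induction; as written this is an acknowledged gap rather than an argument. In the $\subseteq$ direction, ``match the two counts'' hides the real work: the inclusion--exclusion over the primary decomposition of $J_G$ involves all subsets $S\subseteq[n]$ with possible redundancies, and the combinatorial enumeration of standard monomials of $A/M$ in a given $\bZ^n$-degree is not stated precisely. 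Until those two computations are actually performed, the proposal remains an outline rather than a proof.
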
 

An important feature of generic initial ideals is that they behave well with respect to minimal primary decompositions.

  \begin{proposition}  \cite[Corollary 1.12]{CDG4} \label{IntersectionPrimesgin}
Let $J_G=P_{S_1}(G) \cap \cdots \cap P_{S_r}(G)$ be the minimal primary
decomposition of the binomial edge ideal $J_G$ associated to a graph $G$ on $[n]$ . Then we have
 a decomposition $$\gin(J_G)=\gin(P_{S_1}(G)) \cap \cdots \cap \gin(P_{S_r}(G)).$$

\end{proposition}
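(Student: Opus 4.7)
The plan is to combine the Cartwright--Sturmfels (CS) framework with an explicit combinatorial comparison of generators. The key input facts are: $J_G$ is CS with $\gin(J_G)$ described in Theorem \ref{mainbinedge}; each $P_S(G)$ is itself CS (being a sum of a coordinate ideal and binomial edge ideals of complete graphs), so $\gin(P_S(G))$ is a radical Borel-fixed monomial ideal sharing the multigraded Hilbert function of $P_S(G)$; and the multigraded gin, being computed via a generic $\mathrm{GL}_2^n$-action on the pairs $(x_i,y_i)$ followed by an initial ideal, commutes with finite sums of $\bZ^n$-graded ideals.

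First I would write $\gin(P_S(G))$ down explicitly. A generic $\mathrm{GL}_2^n$ change of coordinates preserves each $J_{\widetilde{H}}$ up to nonzero scalars on its $2\times 2$ minors, and specializing Theorem \ref{mainbinedge} to a complete graph gives $\gin(J_{\widetilde{H}}) = \langle x_a x_b : a<b,\ a,b\in V(\widetilde{H})\rangle$. Combining with the coordinate part yields
\begin{equation*}
\gin(P_S(G)) = \langle x_i, y_i : i\in S\rangle + \sum_{k=1}^{c(S)} \langle x_a x_b : a<b,\ \{a,b\}\subseteq V(G_k)\rangle.
\end{equation*}

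With this formula in hand, the inclusion $\gin(J_G)\subseteq\bigcap_\ell \gin(P_{S_\ell}(G))$ becomes a direct generator check. A minimal generator $m=x_ix_jy_{a_1}\cdots y_{a_v}$ of $\gin(J_G)$ corresponds to a path $(i,a_1,\ldots,a_v,j)$ in $G$; for each $\ell$, either $S_\ell$ meets $\{i,j\}$ (so some $x_i$ or $x_j$ lies in $\gin(P_{S_\ell}(G))$), or $S_\ell$ meets $\{a_1,\ldots,a_v\}$ (some $y_{a_k}$ lies in $\gin(P_{S_\ell}(G))$), or the path avoids $S_\ell$ entirely and is then contained in a single connected component $G_k$ of $G\setminus S_\ell$, so that $x_ix_j$ belongs to the corresponding clique piece of $\gin(P_{S_\ell}(G))$.

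For the reverse inclusion I would argue via multigraded Hilbert series. Both $\gin(J_G)$ and $\bigcap_\ell\gin(P_{S_\ell}(G))$ are radical Borel-fixed $\bZ^n$-graded monomial ideals, and among such ideals the multigraded Hilbert function determines the ideal uniquely. By inclusion--exclusion,
\begin{equation*}
HS\Bigl(A\big/\textstyle\bigcap_\ell \gin(P_{S_\ell}(G));\,t\Bigr) = \sum_{\emptyset\neq T} (-1)^{|T|+1}\, HS\Bigl(A\big/\textstyle\sum_{\ell\in T}\gin(P_{S_\ell}(G));\,t\Bigr),
\end{equation*}
and the analogous identity holds on the binomial side for $J_G=\bigcap_\ell P_{S_\ell}(G)$. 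Since gin commutes with sums of CS ideals and preserves multigraded Hilbert functions, the right-hand sides match termwise, so the two Borel-fixed radicals coincide. The main technical obstacle is verifying that every intermediate sum $\sum_{\ell\in T} P_{S_\ell}(G)$ is itself CS, which is what makes the transfer between the binomial and monomial inclusion--exclusions legitimate; once this is in place, the combinatorial forward inclusion and Borel-fixed uniqueness close the argument.
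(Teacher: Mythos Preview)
The paper does not prove this proposition; it is quoted from \cite[Corollary~1.12]{CDG4} and used as a black box. So there is no in-paper proof to compare against, and I will just assess your argument on its own.

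Your forward inclusion is correct but overworked: for any ideals and any term order one has $\mathrm{in}(I\cap J)\subseteq \mathrm{in}(I)\cap\mathrm{in}(J)$, and since a single generic change of coordinates can be chosen to compute all finitely many gins simultaneously, $\gin(J_G)\subseteq\bigcap_\ell\gin(P_{S_\ell}(G))$ is automatic. The path-by-path generator check is not needed.

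The reverse inclusion via inclusion--exclusion has two genuine gaps. First, the identity
\[
HS\bigl(A/\textstyle\bigcap_\ell P_{S_\ell}(G);t\bigr)=\sum_{\emptyset\ne T}(-1)^{|T|+1}HS\bigl(A/\textstyle\sum_{\ell\in T}P_{S_\ell}(G);t\bigr)
\]
is \emph{not} a formal fact about arbitrary ideals: it is equivalent to exactness of the Mayer--Vietoris complex, which in turn needs the distributive-lattice condition $(I_p+I_q)\cap I_r=(I_p\cap I_r)+(I_q\cap I_r)$. That is precisely Assumption~(A1) of the present paper, and proving it for the primes of a binomial edge ideal is the content of Discussion~\ref{discussion} and the proposition following it --- nontrivial work you would have to import. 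Second, the step you flag as the ``main technical obstacle'' (that every sum $\sum_{\ell\in T}P_{S_\ell}(G)$ is CS, so that gin commutes with the sum and preserves its Hilbert function) is indeed the crux, and you do not resolve it. It \emph{is} a theorem of Conca--De~Negri--Gorla that CS ideals are closed under sums and that gin is additive on them; once you cite that, your argument closes. But at that point you are essentially reproving \cite[Corollary~1.12]{CDG4} using other parts of the same paper, so it is cleaner to invoke their general statement (gin commutes with intersections of CS ideals) directly rather than route through inclusion--exclusion.
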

  
  If we take a close look, we will see that the decomposition of $\gin(J_G)$ that we obtain is not necessarily a minimal primary decomposition.
  
  \begin{proposition}\label{gin}
  Let $G$ be a graph on $[n]$, and $S\subseteq [n]$. Then:
  
  \begin{itemize}
\item[i)] $\gin(P_S(G)) = \langle x_i,y_i \hskip 2mm | \hskip 2mm i\in S \rangle + \gin(J_{\widetilde{G}_1}) +\cdots +\gin(J_{\widetilde{G}_{c}})$.  In particular, it is a squarefree monomial ideal  of height $n-c+|S|$ for every $S\subseteq [n]$.

\vskip 2mm

\item[ii)] If $\widetilde{G}$ is a complete graph on the vertices $\{1, \dots , t\} \subseteq [n]$, then 
$$\gin(J_{\widetilde{G}})=\langle x_i x_j \hskip 2mm | \hskip 2mm 1\leq i<j\leq t  \rangle =\bigcap_{i=1}^t \langle x_1, \dots \widehat{x_i}, \dots , x_t \rangle$$ which is a Cohen-Macaulay monomial ideal.

\end{itemize}

  \end{proposition}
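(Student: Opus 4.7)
The plan is to prove part (ii) first by a direct application of Theorem \ref{mainbinedge}, then deduce part (i) by exploiting that the $\Z^n$-graded generic initial ideal is taken with respect to a group action that respects the separation of variables in the sum defining $P_S(G)$.

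For part (ii), I would invoke Theorem \ref{mainbinedge} applied to the complete graph $\widetilde{G}$ on the vertices $\{1,\dots,t\}$. Every pair $\{i,j\}$ with $i<j$ is itself a path (of length one), contributing the generator $x_ix_j$, while any longer path $\{i,a_1,\dots,a_v,j\}$ yields a generator $x_ix_jy_{a_1}\cdots y_{a_v}$ which is divisible by $x_ix_j$. Hence the minimal monomial generators of $\gin(J_{\widetilde G})$ are exactly $\{x_ix_j\mid 1\le i<j\le t\}$. The stated intersection is the standard primary decomposition of the Stanley--Reisner ideal of $t$ disjoint vertices in $\{x_1,\dots,x_t\}$: every $x_ix_j$ belongs to each $\langle x_1,\dots,\widehat{x_k},\dots,x_t\rangle$ (since $k$ can equal at most one of $i,j$), and conversely any monomial in the intersection must involve two distinct $x$-variables. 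For the Cohen--Macaulay property, observe that
\[
A/\gin(J_{\widetilde G}) \;\cong\; \K[y_1,\dots,y_t]\otimes_\K \K[x_1,\dots,x_t]/\langle x_ix_j\mid i<j\rangle;
\]
the second factor is a reduced $1$-dimensional ring, hence Cohen--Macaulay, and then Theorem \ref{CMtensor} yields the Cohen--Macaulayness of the tensor product.

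For part (i), the key observation is that $P_S(G) = J_S + J_{\widetilde{G}_1} + \cdots + J_{\widetilde{G}_c}$, where $J_S := \langle x_i,y_i\mid i\in S\rangle$ sits in $\K[x_i,y_i\mid i\in S]$ and each $J_{\widetilde{G}_j}$ sits in $\K[x_k,y_k\mid k\in V(G_j)]$, and these polynomial subrings involve pairwise disjoint sets of variables. The $\Z^n$-graded generic initial ideal is computed via a generic element of the multigraded Borel group $(\mathrm{GL}_2(\K))^n$ acting diagonally on the pairs $(x_i,y_i)$; see \cite{CDG4}. A generic such $g$ fixes $J_S$ (which is manifestly $(\mathrm{GL}_2)^{|S|}$-stable) and sends each $J_{\widetilde G_j}$ to an ideal in the same subring whose initial ideal is $\gin(J_{\widetilde G_j})$. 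Since the transforms $g(J_S)=J_S$ and the $g(J_{\widetilde G_j})$ live in pairwise disjoint variable sets, the union of their Gröbner bases is a Gröbner basis of the sum, so
\[
\gin(P_S(G))\;=\;J_S + \gin(J_{\widetilde G_1})+\cdots +\gin(J_{\widetilde G_c}).
\]
Each summand is a squarefree monomial ideal, so the sum is. Finally, because the summands are supported in disjoint variables, heights add: $\mathrm{ht}(J_S)=2|S|$ and, by part (ii), $\mathrm{ht}(\gin(J_{\widetilde G_j}))=n_j-1$ where $n_j=|V(G_j)|$, giving
\[
\mathrm{ht}(\gin(P_S(G)))\;=\;2|S| + \sum_{j=1}^c(n_j-1)\;=\;2|S|+(n-|S|)-c\;=\;n-c+|S|.
\]

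The main obstacle is not a deep computation but rather pinning down correctly the group defining the $\Z^n$-graded gin and justifying the Gröbner-basis-additivity across disjoint variable sets; once these are established, both parts reduce to the path description from Theorem \ref{mainbinedge} and elementary Stanley--Reisner considerations.
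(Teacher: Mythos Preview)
Your argument is correct. The paper actually states this proposition without proof, treating it as an immediate consequence of Theorem~\ref{mainbinedge} and Proposition~\ref{IntersectionPrimesgin}; you have supplied the details that the paper omits, and your reasoning---the path description for part~(ii), the disjoint-variable decomposition together with the $(\mathrm{GL}_2(\K))^n$-action for part~(i), and the additivity of heights---is exactly the intended justification.
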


\subsection{A poset associated to the generic initial ideal of a binomial edge ideal} \label{poset2}
  Let $G$ be a graph on the set of vertices $[n]$ and let $J_G$ be its associated binomial edge ideal. Its generic ideal $\gin(J_G)$ is monomial so we may use the original Hochster's formula to describe its local cohomology modules.  However, since our goal is to compare the local cohomology modules of both ideals, we are going to use Theorem \ref{sucesion espectral}  considering a poset associated to $\gin(J_G)$ that will be more useful for our purposes.
  
  \vskip 2mm
  
Let $J_G=P_{S_1}(G) \cap \cdots \cap P_{S_r}(G)$ be the minimal primary
decomposition of the binomial edge ideal $J_G$.   By Proposition \ref{IntersectionPrimesgin}
we have the decomposition $$\gin(J_G)=\gin(P_{S_1}(G)) \cap \cdots \cap \gin(P_{S_r}(G)).$$ 
Associated to this decomposition we are going to consider the poset $\cQ_{\gin(J_G)}$ which contains $\gin(I_q)$ for all ideals $I_q$ in  
$\cQ_{J_G}$. It is clear that $\cQ_{\gin(J_G)}$ is the same poset as $\cQ_{J_G}$ where we simply have a different label for the ideals so, if no confusion arise, we will simple denote both as $\cQ_{J_G}$.
In this way we  emphasize that the decomposition obtained in Theorem \ref{main} and Theorem \ref{main2}  for the local cohomology of $J_G$ and 
$\gin(J_G)$ respectively, are the same since the order complexes $(q,1_{{\cQ_{J_G}}})$ are exactly the same in both cases and thus they have the same reduced cohomology. 

\vskip 2mm


\subsection{Local cohomology of the generic initial ideal of a binomial edge ideal}

In this Section we present a decomposition of the local cohomology modules of the generic initial ideal of a binomial edge ideal which is coarser than the original Hochster's formula for monomial ideals but it will be more appropriate for our purposes. In order to apply Theorem \ref{Hochster}
we have to check first that the poset $\cQ_{J_G}$ associated to the generic initial ideal of a binomial edge ideal satisfies Assumptions (A1), (A2) and (A3). 

\vskip 2mm 

In this context we are dealing with  squarefree monomial ideals so $\cQ_{J_G}$ is a subset of a distributive lattice of ideals of $A$
so Assumption {\rm (A1)} is satisfied. Indeed 
$$\lim_{q\in \cQ_{J_G}} A/\gin(I_q)  \cong A/{\gin(J_G)}$$ since $\cQ_{J_G}$ is cofinal with the poset $\cP_{\gin(J_G)}$ associated to a minimal primary decomposition 
of $\gin(J_G)$. We have
$$A/\gin(I_q) =  A_1/\gin(J_{\widetilde{H}_1})  \otimes_\K \cdots \otimes_\K A_c/\gin(J_{\widetilde{H}_c}) ,$$
so it  is Cohen-Macaulay by Proposition \ref{gin} and Theorem \ref{CMtensor} so Assumption {\rm (A2)} is also satisfied.

\vskip 2mm 

In order to prove Assumption {\rm (A3)} we take $p\neq q$ with $\hlt (\gin(I_q)) > \hlt (\gin(I_p))$ 
and we want to check that  $\gin(I_q)$ is not contained in any minimal prime of $\gin(I_p)$. 
Recall that these ideals are of the form

\hskip 5mm $ \gin(I_p)= \gin(P_{S_p}(H_p))= \langle x_i,y_i \hskip 2mm | \hskip 2mm i\in S_p \rangle + \gin(J_{\widetilde{H}_{p,1}}) +\cdots +\gin(J_{\widetilde{H}_{p,d}})$

\hskip 5mm $  \gin(I_q)= \gin(P_{S_q}(H_q))=\langle x_j,y_j \hskip 2mm | \hskip 2mm j\in S_q \rangle + \gin(J_{\widetilde{H}_{q,1}}) +\cdots +\gin(J_{\widetilde{H}_{q,c}})$

\noindent for some  graphs $H_p$ and $H_q$ in the vertex set $[n]$ and some $S_p, S_q\subseteq [n]$. Indeed, these graphs may not contain all the vertices in $[n]$. Notice first that Assumption {\rm (A3)} is satisfied when $S_p = \emptyset$ just because there is $y_j \in \gin(I_q)$ that does not belong to any minimal prime of  $ \gin(I_p)$.  If $S_p \neq \emptyset$, 
we may assume without loss of generality that we are using the variables
$$\{\underbrace{x_{1},\dots , x_{a}, y_{1},\dots , y_{a}}_{S_p}, \underbrace{x_{a + 1},  \dots , x_{b}}_{\widetilde{H}_{p,1}}, \dots ,  \underbrace{x_{\ell +1},  \dots , x_{m}}_{\widetilde{H}_{p,d}} \}$$
with $1 \leq  a < b < \cdots < \ell < m \leq n$,  accordingly to the description of $ \gin(I_p)$. Therefore we have the following minimal primary decomposition 
\begin{align*}
 \gin(I_p) &= \langle x_{1},\dots , x_{a}, y_{1},\dots , y_{a} \rangle  + \\ & + \bigcap_{ j_1 = a+1} ^{b} \langle x_{a+1}, \dots , \widehat{x_{j_1}}, \dots , x_{b} \rangle + \cdots + \bigcap_{  j_d= \ell +1}^{m} \langle x_{\ell +1 }, \dots , \widehat{x_{j_d}}, \dots , x_{m} \rangle \\
 & = \bigcap_{ j_1 = a+1} ^{b}\cdots \bigcap_{  j_d= \ell +1}^{m} \langle  x_{1},\dots , x_{a}, y_{1},\dots , y_{a} , x_{a+1}, \dots , \widehat{x_{j_1}}, \dots , x_{b}, \dots , x_{\ell +1}, \dots , \widehat{x_{j_d}}, \dots , x_{m} \rangle.
\end{align*}

Now we are ready to prove Assumption {\rm (A3)}. If $S_q \not \subset S_p$, then there exists $y_{s}$, for some $s \geq a$ such that $y_s \not \in \gin(I_p)$ and, in particular, this element does not belong to any minimal prime of $\gin(I_p)$.

\vskip 2mm

If $S_q \subseteq S_p$ and $\hlt (\gin(I_q)) > \hlt (\gin(I_p))$ then there is at least an element,  say $x_sx_t \in  \gin(I_q)$, 
with both vertices not belonging to the graph $H_p$, that is $s,t \geq m$. Then it is clear that $\gin(I_q)$ is not contained in any minimal prime of $\gin(I_p)$.

\vskip 2mm

Once we know that the poset $\cQ_{J_G}$ satisfies Assumptions (A1), (A2) and (A3), we obtain the following  coarser version of Hochster's formula.

\begin{theorem}\label{main2}
Let $A=\K[x_1,\dots, x_n, y_1,\dots, y_n]$ be a polynomial ring with coefficients over a field $\K$ and $\fM$ be its homogeneous maximal ideal. Let $J_G \subseteq A$ be the binomial edge ideal associated to a graph $G$ in the set of vertices $[n]$. Let $\cQ_{J_G}$ be the poset associated to a minimal primary decomposition of $J_G$. Then the local cohomology modules with respect to $\fM$ of 
$A/\gin(J_G)$ admit the following decomposition as $\K$-vector spaces:

\[
H_{\mathfrak{m}}^{r} \left(A/\gin(J_G)\right)\cong\bigoplus_{p\in \cQ_{J_G}} H_{\mathfrak{m}}^{d_p} \left(A/\gin(I_p)\right)^{\oplus M_{r,p}}
\]
where $M_{i,p}=\dim_{\K} \widetilde{H}^{i-d_p-1} ((p,1_{{\cQ_{J_G}}});
\K).$ Moreover this is a decomposition of graded $\K$-vector spaces.
\end{theorem}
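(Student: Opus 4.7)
My plan is to apply the general spectral-sequence-based Hochster formula of Theorem \ref{Hochster} to a suitable poset associated to $\gin(J_G)$. The natural choice is the poset $\cQ_{J_G}$ built for the binomial ideal, now reinterpreted through the decomposition $\gin(J_G)=\gin(P_{S_1}(G))\cap\cdots\cap\gin(P_{S_r}(G))$ guaranteed by Proposition \ref{IntersectionPrimesgin}. This identification is advantageous because the order complexes $(p,1_{\cQ_{J_G}})$, and hence the multiplicities $M_{r,p}$, are literally the same combinatorial invariants used in Theorem \ref{main}; this is precisely what will allow the subsequent comparison of $H^r_{\fM}(A/J_G)$ and $H^r_{\fM}(A/\gin(J_G))$ required for Conjecture \ref{conjH}.

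With this reduction in place, the substantive work is to verify the three Key Assumptions (A1), (A2), (A3) for $\cQ_{J_G}$ on the gin side. For (A1), I would use Proposition \ref{gin}(i) to note that every $\gin(I_q)$ is a squarefree monomial ideal and therefore sits inside the distributive lattice of monomial ideals; cofinality of $\cQ_{J_G}$ with the poset of sums arising from the minimal primary decomposition of $\gin(J_G)$ then yields $\lim_{q\in\cQ_{J_G}} A/\gin(I_q) \cong A/\gin(J_G)$. For (A2), I would invoke Proposition \ref{gin}(i) again to factor $A/\gin(I_q)$ as a tensor product $A_1/\gin(J_{\widetilde{H}_1}) \otimes_\K \cdots \otimes_\K A_{c_q}/\gin(J_{\widetilde{H}_{c_q}})$, whose factors are Cohen-Macaulay by Proposition \ref{gin}(ii), and conclude via Theorem \ref{CMtensor}.

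The main obstacle is Assumption (A3). Given $p\neq q$ with $\hlt(\gin(I_q))>\hlt(\gin(I_p))$, I would produce an explicit generator of $\gin(I_q)$ lying outside every minimal prime of $\gin(I_p)$, exploiting the very transparent description of those minimal primes coming from Proposition \ref{gin}: each is generated by $x_i,y_i$ for $i\in S_p$ together with, for each component $\widetilde{H}_{p,s}$, all but one of the $x$-variables of that component. A short case split—first checking whether $S_q\subseteq S_p$, in which case the strict height increase forces $H_q$ to involve a vertex outside those of $H_p$ and hence a quadratic $x_sx_t \in \gin(I_q)$ with a fresh variable, and otherwise producing a suitable $y_s$ with $s\in S_q\setminus S_p$—yields the desired non-containment. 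Remark \ref{rmkA3} is what allows us to restrict attention to this height-increase regime and makes the case analysis tractable. Once (A1)--(A3) are in hand, Theorem \ref{Hochster} immediately delivers both the $\K$-vector space decomposition and its graded refinement, completing the proof.
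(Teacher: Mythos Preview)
Your proposal is correct and follows essentially the same route as the paper: verify (A1)--(A3) for the poset $\cQ_{J_G}$ relabelled by $\gin$, using distributivity of monomial ideals plus cofinality for (A1), the tensor factorisation and Theorem \ref{CMtensor} for (A2), and the same $S_q$-versus-$S_p$ case split for (A3), then invoke Theorem \ref{Hochster}. The only refinement the paper adds is that in the case $S_q\subseteq S_p$ one actually finds $x_sx_t\in\gin(I_q)$ with \emph{both} vertices $s,t$ outside the vertex set of $H_p$ (not just one), which is what guarantees the element avoids every minimal prime of $\gin(I_p)$; you should sharpen your sketch accordingly.
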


As a consequence of the decomposition results for local cohomology modules obtained in Theorem \ref{main} and Theorem \ref{main2},  we can give an affirmative answer to Conjecture \ref{conjH} for the case of binomial edge ideals.

\begin{theorem}\label{compare}
Let $A=\K[x_1,\dots, x_n, y_1,\dots, y_n]$ be a polynomial ring with coefficients over a field $\K$ and $\fM$ be its homogeneous maximal ideal. Let $J_G \subseteq A$ be the binomial edge ideal associated to a graph $G$ in the set of vertices $[n]$.  Then we have
$$\dim_\K H^r_{\fM}(A/J_G)_a=\dim_\K H^r_{\fM}(A/\gin(J_G))_a$$
for every $r\in \bN$ and every $a\in \Z^n$.
\end{theorem}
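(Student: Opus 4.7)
The plan is to compare the two decompositions furnished by Theorem \ref{main} and Theorem \ref{main2}, which express $H_{\mathfrak{m}}^r(A/J_G)$ and $H_{\mathfrak{m}}^r(A/\gin(J_G))$ as $\Z^n$-graded $\K$-vector spaces over the \emph{same} poset $\cQ_{J_G}$, with the \emph{same} multiplicities
\[
M_{r,q}=\dim_\K \widetilde{H}^{r-d_q-1}((q,1_{\cQ_{J_G}});\K).
\]
These multiplicities coincide in the two formulas because they depend only on the abstract order complex and on $d_q = \dim A/I_q = \dim A/\gin(I_q)$, where the last equality uses that $\gin$ preserves Krull dimension. Comparing the two decompositions graded piece by graded piece, the statement of Theorem \ref{compare} reduces to establishing the local equality
\[
\dim_\K H_{\mathfrak{m}}^{d_q}(A/I_q)_a = \dim_\K H_{\mathfrak{m}}^{d_q}(A/\gin(I_q))_a
\]
for every $q\in\cQ_{J_G}$ and every $a\in\Z^n$.

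To establish this, I would combine two standard facts. First, the $\Z^n$-graded generic initial ideal preserves the multigraded Hilbert function, so $A/I_q$ and $A/\gin(I_q)$ share the same $\Z^n$-graded Hilbert series. Second, both algebras are Cohen-Macaulay of dimension $d_q$: the first because the ideal of $2$-minors of a generic $2\times n$ matrix is Cohen-Macaulay, combined with Theorem \ref{CMtensor}; the second by Proposition \ref{gin} together with Theorem \ref{CMtensor}. For a Cohen-Macaulay $\Z^n$-graded $A$-module $M$ of dimension $d$, graded local duality realizes $H_{\mathfrak{m}}^{d}(M)$ as the multigraded Matlis dual of the canonical module $\omega_M$, and the multigraded Hilbert series of $\omega_M$ is in turn determined by that of $M$ via the identity $HS(\omega_M;t)=(-1)^{d}HS(M;t^{-1})$ used in the proof of Theorem \ref{Hilbert}. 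Applied to $M=A/I_q$ and $M=A/\gin(I_q)$, this yields the required equality of multigraded Hilbert functions for the two top local cohomology modules.

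The main technical point is justifying the canonical-module Hilbert series identity in the present $\Z^n$-grading (where $\deg(x_i)=\deg(y_i)=e_i$), rather than the standard $\Z$-grading. A clean route is to invoke the tensor product factorization $A/I_q = \bigotimes_{i=1}^{c_q} A_i/J_{\widetilde{H}_i}$ from Notation \ref{notation} (and the analogous factorization of $A/\gin(I_q)$ given by Proposition \ref{gin}), use the K\"unneth formula to decompose $H_{\mathfrak{m}}^{d_q}(A/I_q)$ as a tensor product of top local cohomologies of its Cohen-Macaulay factors, and then carry out the comparison on each complete-graph factor, where the Hilbert series identity reduces to exactly the one already used in the proof of Theorem \ref{Hilbert}. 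Combining this factor-by-factor agreement with the initial reduction via the two decomposition theorems completes the argument.
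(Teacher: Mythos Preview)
Your proposal is correct and follows essentially the same route as the paper: reduce via Theorems \ref{main} and \ref{main2} to the equality $\dim_\K H_{\mathfrak{m}}^{d_q}(A/I_q)_a = \dim_\K H_{\mathfrak{m}}^{d_q}(A/\gin(I_q))_a$, then use the tensor factorization to pass to the complete-graph factors $A_i/J_{\widetilde{H}_i}$ and conclude from the fact that these and their $\gin$ counterparts are Cohen--Macaulay with the same multigraded Hilbert series. The paper's proof is more terse at the last step (it simply observes that equal Hilbert series plus Cohen--Macaulayness force equal Hilbert series on the unique nonvanishing local cohomology), whereas you spell out the local-duality/canonical-module mechanism and the K\"unneth step explicitly; but the argument is the same.
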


\begin{proof}
By Theorem \ref{main} and Theorem \ref{main2}  we only have to check that
$$\dim_\K H^{d_q}_{\fM}(A/I_q)_a=\dim_\K H^{d_q}_{\fM}(A/\gin(I_q))_a$$
for  every $a\in \Z^n$ and every $q\in \cQ_{J_G}$.  
Assuming that the ideal $I_q$ has the form 
$$I_q=P_S(H)= \langle x_i,y_i \hskip 2mm | \hskip 2mm i\in S \rangle + J_{\widetilde{H}_1} +\cdots + J_{\widetilde{H}_{c}}$$  it is then enough to prove that $$\dim_\K H^{d_p}_{\fM_i}(A_i/J_{\widetilde{H_i}})_a=\dim_\K H^{d_p}_{\fM_i}(A_i/\gin(J_{\widetilde{H_i}})))_a$$
for  every $a\in \Z^n$ where ${\widetilde{H_i}}$ is a complete graph.  This equality holds true since $A_i/J_{\widetilde{H_i}}$ and $A_i/\gin(J_{\widetilde{H_i}})$ have the same Hilbert and  they are Cohen-Macaulay so they only have one local cohomology module different from zero.
\end{proof}

%

%

 As a consequence of this result we have that the extremal Betti numbers, the projective dimension and the Castelnuovo-Mumford regularity 
 of $J_G$  and $\gin(J_G)$ are equal. In particular we have:

\begin{proposition}\label{regularity}
Let $A=\K[x_1,\dots, x_n, y_1,\dots, y_n]$ be a polynomial ring with coefficients over a field $\K$ and $\fM$ be its homogeneous maximal ideal. Let $J_G \subseteq A$ be the binomial edge ideal associated to a graph $G$ in the set of vertices $[n]$. Let $\cQ_{J_G}$ be the poset associated to a minimal primary decomposition of $J_G$. Then, 
\[
\reg \left(A/J_G\right)=\max_{r\geq 0,\ q\in \cQ_{J_G}} \{r-d_q \hskip 2mm | \hskip 2mm \ M_{r,p}\neq 0\},
\]
where 
$M_{r,q}=\dim_{\K} \widetilde{H}^{r-d_q-1} ((q,1_{\cQ_{J_G}});
\K).$
\end{proposition}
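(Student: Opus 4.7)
\textit{Proof plan.} The plan is to combine the standard cohomological characterization of Castelnuovo--Mumford regularity with the graded Hochster-type decomposition from Theorem \ref{main}. Recall that for any finitely generated graded $A$-module $M$ one has
\[
\reg(M) = \max\{i + a_i(M) : H^i_{\fM}(M) \neq 0\}, \qquad a_i(M) := \sup\{j : H^i_{\fM}(M)_j \neq 0\}.
\]
I would apply this formula to $M = A/J_G$ after decomposing each local cohomology module via the building blocks supplied by Theorem \ref{main}.

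By the graded version of Theorem \ref{main},
\[
H^r_{\fM}(A/J_G) \cong \bigoplus_{q \in \cQ_{J_G}} H^{d_q}_{\fM}(A/I_q)^{\oplus M_{r,q}}
\]
as graded $\K$-vector spaces. Consequently $H^r_{\fM}(A/J_G)$ is nonzero exactly when some $M_{r,q}$ is nonzero, and when nonzero its top degree $a_r(A/J_G)$ equals the maximum, over $q \in \cQ_{J_G}$ with $M_{r,q}\neq 0$, of the top nonzero degree of $H^{d_q}_{\fM}(A/I_q)$. This reduces the problem to locating the end of $H^{d_q}_{\fM}(A/I_q)$ for each $q$.

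For each $q$, the quotient $A/I_q$ is Cohen--Macaulay of dimension $d_q$, by Theorem \ref{CMtensor} applied to the tensor decomposition $A/I_q = \bigotimes_{i} A_i/J_{\widetilde{H}_i}$ of Notation \ref{notation}. Hence only the top local cohomology $H^{d_q}_{\fM}(A/I_q)$ is nonzero, and its top degree is the $a$-invariant of $A/I_q$. Since $a$-invariants are additive under tensor products of Cohen--Macaulay $\K$-algebras, it can be read off from the factors, which are either polynomial rings in two variables or coordinate rings of rational normal scrolls. Equivalently, the explicit Hilbert series of $H^{d_q}_{\fM}(A/I_q)$ recorded in Theorem \ref{Hilbert} exhibits this top degree directly.

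Substituting these values into the maximum in the first step yields the stated formula for $\reg(A/J_G)$ as a maximum over pairs $(r,q)$ with $M_{r,q} \neq 0$. The main step is the graded decomposition from Theorem \ref{main}; the potential obstacle is then a careful accounting of how the cohomological degree $r$ combines with the dimension $d_q$ (and, via the Cohen--Macaulay building blocks, with $\reg(A/I_q)$) so that the contributions collapse into the clean expression $r - d_q$ of the statement.
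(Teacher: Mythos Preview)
Your outline is essentially the paper's own argument: start from $\reg(A/J_G)=\max_r\{r+\mathrm{end}(H^r_\fM(A/J_G))\}$, plug in the graded decomposition of Theorem~\ref{main}, and reduce to computing $\mathrm{end}\bigl(H^{d_q}_\fM(A/I_q)\bigr)$ for each $q$. The only substantive difference is in how that last number is obtained. The paper passes to the generic initial ideal, using Theorem~\ref{compare} to replace $A/I_q$ by $A/\gin(I_q)$ and then computing $\mathrm{end}\bigl(H^{d_i}_{\fM_i}(A_i/\gin(J_{\widetilde H_i}))\bigr)$ directly from the monomial description in Proposition~\ref{gin}. You instead propose to read it off from the explicit Hilbert series in Theorem~\ref{Hilbert}, or equivalently from the additivity of $a$-invariants under tensor products; this avoids the detour through $\gin$ and is arguably cleaner, since Theorem~\ref{Hilbert} already encodes the needed top degree. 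Either route lands on the same value, and the step you flag as a ``potential obstacle'' is exactly the short computation the paper carries out, so you should simply do it rather than leave it open.
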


\begin{proof}
We have
\[
\reg \left(A/J_G\right)=\max_{r\geq 0} \{{\rm end}\left(H_{\mathfrak{m}}^r \left(A/J_G\right)\right)+r\}=\max_{r\geq 0,\ q\in \cQ_{J_G}} \{{\rm end}\left( H_{\mathfrak{m}}^{d_q}
\left(A/I_q\right)\right)+r \hskip 2mm | \hskip 2mm \ M_{r,p}\neq 0\}.
\]
Recall that $\gin(J_{\widetilde{H}_i}) = \langle x_i x_j \hskip 2mm | \hskip 2mm 1\leq i<j\leq n_{i,q}  \rangle$  and thus, from a direct computation, we get ${\rm end}\left(H_{\mathfrak{m_i}}^{d_i} \left(A_i/\gin(J_{\widetilde{H}_i})\right)\right)=-d_i$. Therefore
 $${\rm end}\left(H_{\mathfrak{m}}^{d_q} \left(A/I_q\right)\right)= {\rm end}\left(H_{\mathfrak{m}}^{d_q} \left(A/\gin(I_q)\right)\right)=-d_q$$  and the result follows.
\end{proof}

Finally we turn our attention to the $\Z^n$-graded Hilbert series 
$$HS(M;t_1,\dots , t_n):= \sum_{a\in \Z^n} \dim_{\K} (M_a) \hskip 1mm t_1^{a_1} \cdots t_n^{a_n}$$ 
where $a=(a_1,\dots, a_n) \in \Z^n$. The formula that we obtain is a little bit involved. To start with, for each ideal $I_q \in \cQ_{J_G} $, we will rename and reorder the variables $t_i$ accordingly so we have 
$$\{t_1,\dots, t_n\} = \{ t^0_1,\dots, t^0_{|S|}, t^1_1,\dots, t^1_{n_{1,q}}, \dots , t^{c_q}_1,\dots, t^{c_q}_{n_{c_q,q}}\}.$$
In the case that $I_q=\langle x_i, y_i \hskip 2mm | \hskip 2mm i\in S \rangle$ is a monomial ideal, we rename and reorder the variables as
$$\{t_1,\dots, t_n\} = \{ t_{1,q},\dots, t_{|S|,q}, t_{|S|+1,q},\dots, t_{n,q}\}.$$

\begin{theorem}\label{Hilbert2}
Let $A=\K[x_1,\dots, x_n, y_1,\dots, y_n]$ be a polynomial ring with coefficients over a field $\K$ and $\fM$ be its homogeneous maximal ideal. Let $J_G \subseteq A$ be the binomial edge ideal associated to a graph $G$ in the set of vertices $[n]$. Let $\cQ_{J_G}$ be the poset associated to a minimal primary decomposition of $J_G$. Then
\begin{align*}
 HS\left(H_{\mathfrak{m}}^r \left(A/J_G\right);t_1,\dots , t_n\right) & =  \sum_{\substack{ q\in \cQ_{J_G} \\ I_q \hskip 2mm {\it monomial}}}  M_{r,q} \prod_{i=|S|+1}^{n} \frac{(t_{i,q})^{-2}}{(1- (t_{i,q})^{-1})^2} \\
& \hskip -3.5cm + \sum_{\substack{ q\in \cQ_{J_G} \\ I_q \hskip 2mm {\it not \hskip 1mm monomial}}}   M_{r,q}  \hskip 1mm  
\prod_{i=1}^{c_q} \frac{(t_1^i)^{-1} \cdots (t_{n_{i,q}}^i)^{-1} }{(1- (t_1^i)^{-1})\cdots (1-(t_{n_{i,q}}^i)^{-1})} \left( (n_{i,q}-1)+\sum_{j=1}^{n_{i,q}} \frac{ (t_j^i)^{-1}}{(1-(t_j^i)^{-1})}\right)
\end{align*}
where 
$M_{r,q}=\dim_{\K} \widetilde{H}^{r-d_q-1} ((q,1_{\cQ_{J_G}});
\K).$
\end{theorem}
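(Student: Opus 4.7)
The starting point is the $\Z^n$-graded decomposition of Theorem \ref{main}, which by additivity of Hilbert series reduces the computation to
\[
HS(H^r_{\fM}(A/J_G); t_1,\dots,t_n) = \sum_{q \in \cQ_{J_G}} M_{r,q} \cdot HS(H^{d_q}_{\fM}(A/I_q); t_1,\dots,t_n),
\]
so it suffices to evaluate each individual series $HS(H^{d_q}_{\fM}(A/I_q); t_1,\dots,t_n)$, splitting according to whether $I_q$ is monomial or not.

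When $I_q = \langle x_i, y_i \mid i \in S \rangle$ is monomial, the quotient $A/I_q$ is the polynomial ring $\K[x_j, y_j \mid j \notin S]$ and its top local cohomology has the standard Hilbert series $\prod_{j \notin S}(t_j^{-1}/(1-t_j^{-1}))^2$. After the relabeling $\{t_{|S|+1,q},\dots,t_{n,q}\}$ of the variables indexed by $j \notin S$, this matches exactly the monomial summand in the claimed formula.

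When $I_q$ is non-monomial, the tensor decomposition $A/I_q = \bigotimes_{i=1}^{c_q} A_i/J_{\widetilde{H}_i}$ from Notation \ref{notation} together with the K\"unneth formula for local cohomology of tensor products of Cohen-Macaulay modules yields a $\Z^n$-graded isomorphism
\[
H^{d_q}_{\fM}(A/I_q) \cong \bigotimes_{i=1}^{c_q} H^{d_i}_{\fM_i}(A_i/J_{\widetilde{H}_i}),
\]
so the problem reduces to computing the Hilbert series of $H^{d_i}_{\fM_i}(A_i/J_{\widetilde{H}_i})$ for each complete graph $\widetilde{H}_i$. Here Theorem \ref{compare} enters: it allows the replacement of $J_{\widetilde{H}_i}$ by its generic initial ideal $\gin(J_{\widetilde{H}_i}) = \langle x_j^i x_k^i \mid 1 \le j < k \le n_{i,q}\rangle$, as described in Proposition \ref{gin}. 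The quotient $A_i/\gin(J_{\widetilde{H}_i})$ is the Stanley-Reisner ring of the simplicial complex $\Delta$ on vertices $\{x_1^i,\dots,x_{n_{i,q}}^i,y_1^i,\dots,y_{n_{i,q}}^i\}$ whose facets are $F_k = \{x_k^i,y_1^i,\dots,y_{n_{i,q}}^i\}$ for $k=1,\dots,n_{i,q}$, and the classical $\Z^n$-graded Hochster-Reisner formula gives
\[
HS(H^{d_i}_{\fM_i}(A_i/\gin(J_{\widetilde{H}_i}));\mathbf{t}) = \sum_{\sigma \in \Delta} \dim_{\K} \widetilde{H}^{d_i-|\sigma|-1}(\mathrm{link}_\Delta \sigma;\K) \prod_{v \in \sigma} \frac{t_v^{-1}}{1-t_v^{-1}}.
\]

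The combinatorial core is then identifying the faces $\sigma$ whose link has nonvanishing reduced cohomology in degree $d_i-|\sigma|-1$. Each of the $n_{i,q}$ facets $F_k$ contributes $\widetilde{H}^{-1}(\{\emptyset\})=\K$, producing the term $\sum_{k} \bigl((t_k^i)^{-1}/(1-(t_k^i)^{-1})\bigr) \prod_{j} (t_j^i)^{-1}/(1-(t_j^i)^{-1})$; the face $\{y_1^i,\dots,y_{n_{i,q}}^i\}$ has link equal to the $n_{i,q}$ discrete points $\{x_1^i,\dots,x_{n_{i,q}}^i\}$, whence $\widetilde{H}^0 = \K^{n_{i,q}-1}$, contributing $(n_{i,q}-1)\prod_j (t_j^i)^{-1}/(1-(t_j^i)^{-1})$. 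All remaining faces yield contractible links --- if $\sigma$ omits several $y_j^i$ then the missing ones span a cone vertex in the link, and if $\sigma$ is a facet minus one $y$ the link is a single point --- so they contribute nothing. Summing and then multiplying over $i=1,\dots,c_q$ produces the non-monomial summand of the formula. The main technical point will be this case analysis of links in $\Delta$, but the transparent structure of $\Delta$ ($n_{i,q}$ top-dimensional simplices glued along the common face $\{y_1^i,\dots,y_{n_{i,q}}^i\}$) keeps it straightforward.
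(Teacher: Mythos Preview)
Your proposal is correct and follows exactly the same strategy as the paper: reduce via Theorem~\ref{main} and additivity, treat the monomial case directly, and for the non-monomial case pass to the tensor factors, replace each $J_{\widetilde{H}_i}$ by its generic initial ideal, and compute via the classical Hochster formula. The paper compresses this last step into a single sentence (``a direct computation using the usual Hochster's formula''), whereas you actually carry out the link analysis in the complex $\Delta$; your case-by-case verification of which faces contribute is accurate and matches the stated formula after specializing the fine-graded variables $t_{x_j^i}, t_{y_j^i}$ to the common $t_j^i$.
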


\begin{proof}
From the decomposition given in Theorem \ref{main}
and the additivity of Hilbert series on short exact sequences we have

\begin{align*}
HS\left(H_{\mathfrak{m}}^r \left(A/J_G\right);t_1,\dots, t_n\right) &= \sum_{q\in \cQ_{J_G}}  M_{r,q}  \hskip 1mm HS\left(H_{\mathfrak{m}}^{d_q}
\left(A/I_q\right);t_1,\dots, t_n\right)
\end{align*}
In the case that $I_q=\langle x_i, y_i \hskip 2mm | \hskip 2mm i\in S \rangle$ is  a monomial ideal we have 
\begin{align*}
HS\left(H_{\mathfrak{m}}^{d_q}
\left(A/I_q\right);t_1,\dots, t_n\right) &= \prod_{i=|S|+1}^{n} \frac{(t_{i,q})^{-2}}{(1- (t_{i,q})^{-1})^2} 
\end{align*}
If $I_q$ is not a monomial ideal then 
\begin{align*}
HS\left(H_{\mathfrak{m}}^{d_q}
\left(A/I_q\right);t_1,\dots, t_n\right) &=   
\prod_{i=1}^{c_q} HS\left(H_{\fM_i}^{d_i}(A_i/J_{\widetilde{H}_i});t_1^i,\dots, t_{n_{i,q}}^i\right)\\
&=    \hskip 1mm  
\prod_{i=1}^{c_q} HS\left(H_{\fM_i}^{d_i}(A_i/\gin(J_{\widetilde{H}_i}));t_1^i,\dots, t_{n_{i,q}}^i\right)\\
& \hskip -2cm = \prod_{i=1}^{c_q} \frac{(t_1^i)^{-1} \cdots (t_{n_{i,q}}^i)^{-1} }{(1- (t_1^i)^{-1})\cdots (1-(t_{n_{i,q}}^i)^{-1})} \left( (n_{i,q}-1)+\sum_{j=1}^{n_{i,q}} \frac{ (t_j^i)^{-1}}{(1-(t_j^i)^{-1})}\right)
\end{align*}

The last assertion follows from a direct computation using the usual Hochster's formula for the $\bZ^n$-graded Hilbert series.
\end{proof}


\end{document}